\documentclass[11pt,reqno]{article} 

\usepackage[letterpaper,margin=1in]{geometry}
\usepackage{setspace}

\usepackage[T1]{fontenc}
\usepackage[utf8]{inputenc}
\usepackage{lmodern}

\usepackage{amsmath,amssymb,amsthm,mathtools}
\mathtoolsset{showonlyrefs} 
\numberwithin{equation}{section}

\usepackage{dsfont}

\newcommand{\K}{\mathbb{K}}

\newcommand{\1}{\mathds{1}}
\newcommand{\tW}{\mathtt{W}}
\newcommand{\tB}{\mathtt{B}}
\newcommand{\tV}{\mathtt{V}}
\newcommand{\tE}{\mathtt{E}}

\usepackage{thmtools}
\declaretheorem[name=Theorem]{theorem}
\declaretheorem[name=Conjecture]{conj}
\declaretheorem[name=Lemma]{lemma}
\declaretheorem[name=Proposition]{proposition}
\declaretheorem[name=Corollary]{corollary}
\declaretheorem[style=definition,name=Definition]{definition}
\declaretheorem[style=remark,name=Remark]{remark}

\usepackage{graphicx}
\usepackage{caption}
\usepackage{subcaption}
\usepackage{booktabs}
\usepackage{float}      
\usepackage{siunitx}   
\usepackage{url}
\usepackage{color}
\usepackage[dvipsnames]{xcolor}

\usepackage{tikz}
\usetikzlibrary{calc,arrows.meta,patterns}
\usepackage{pgfplots}
\pgfplotsset{compat=1.18}

\usepackage{enumitem}
\usepackage{microtype}

\newcommand{\ii}{\mathrm{i}}

\newcommand{\eps}{\varepsilon}

\newcommand{\dz}{\mathrm{d}z}

\newcommand{\dw}{\mathrm{d}w}

\let\phi\varphi

\usepackage{authblk} 
\title{The $t$-Split Two-Periodic Aztec Diamond Model}
\author[1]{Meredith Shea}
\affil[1]{Department of Mathematics and Statistics, Vassar College}
\date{\today}

\begin{document}
\maketitle
\begin{abstract}
    In this work we consider an Aztec diamond model split into two unequal regions which are asymptotically fixed in size. Each region is weighted with a distinct two-periodic weighting. We refer to this model as the $t$-split two-periodic Aztec diamond, to signify its difference from the previous work title \textit{Split Two-Periodic Aztec Diamond}, where the model was split into two equal regions. We derive an integral expression for the correlation kernel of the model and give a partial description of the scaling limit behavior, along with a conjecture for the remainder. We refer to the larger and smaller sides of the model as the \textit{dominant} and \textit{non-dominant} sides, and to the location of the weight change as the \textit{interface}. The dominant side exhibits a limit shape that depends only on its own weighting and is identical to that of the two-periodic Aztec diamond, while the non-dominant side appears to have a novel limit shape that depends on both weightings and the location of the interface. Lastly, we consider the complete limit shape in the case where the dominant side two-periodic parameter goes to 0. 
\end{abstract}

\section{Introduction}

\subsection{The Aztec Diamond and Dimer Models}

The \textit{Aztec diamond} of size $n$ is a subset of the square grid on $\mathbb{Z}^2$ which includes all squares whose centers, $(x,y)$, satisfy the inequality $|x| + |y| \leq n$. For more than a quarter of a century mathematicians have studied \textit{domino tilings} of the Aztec diamond, a problem first introduced in \cite{EKLP92}. A domino tiling of the Aztec diamond, or any subset of the square grid, is a placement of dominoes\footnote{Here a \textit{domino} is a $2 \times 1$ rectangle.} that covers the entire region and no dominoes overlap.

One can interpret domino tilings of the Aztec diamond as a stochastic process by assuming a domino tiling is chosen at random from the set of all possible tilings. The simplest case, which we call the uniform model, assumes that all domino tilings are equally likely. The scaling limit of this model, in which the size of the Aztec diamond, $n$, tends to infinity, was studied by Jockusch, Propp and Shor in \cite{JPS98}. They proved that the scaling limit of the model exhibits a deterministic region and that boundary between the deterministic and non-deterministic regions forms a circle. This has become known as the \textit{Arctic Circle Theorem}.

\begin{figure}[H]
    \centering
    \includegraphics[width=0.24\linewidth]{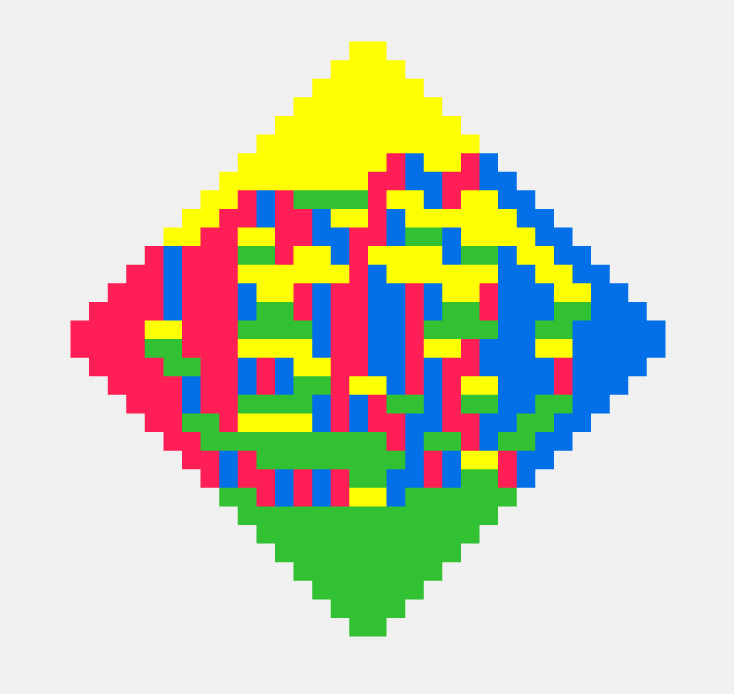}
    \includegraphics[width=0.24\linewidth]{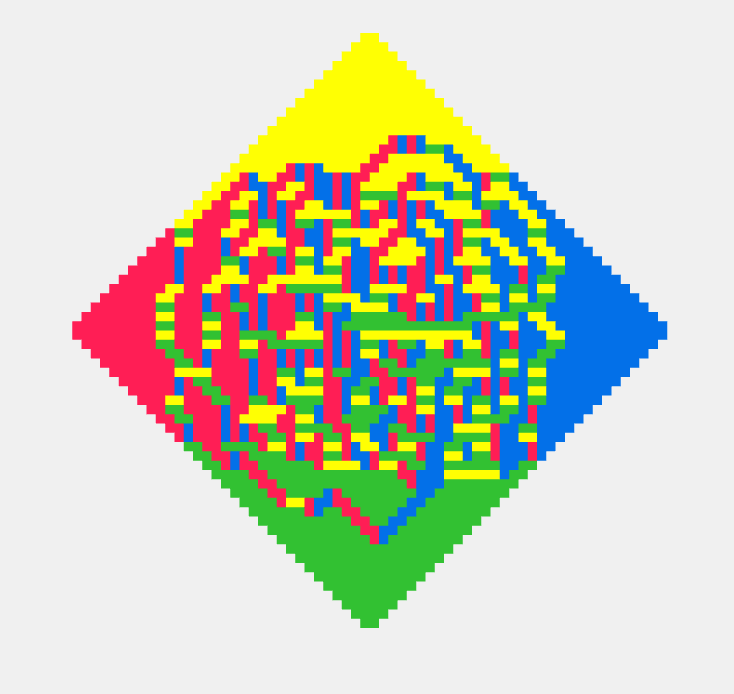}
    \includegraphics[width=0.24\linewidth]{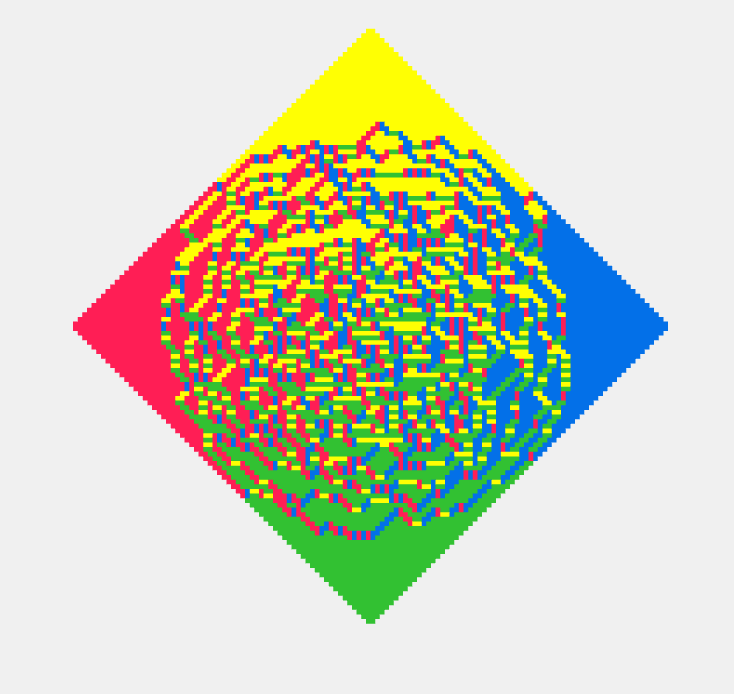}
    \includegraphics[width=0.24\linewidth]{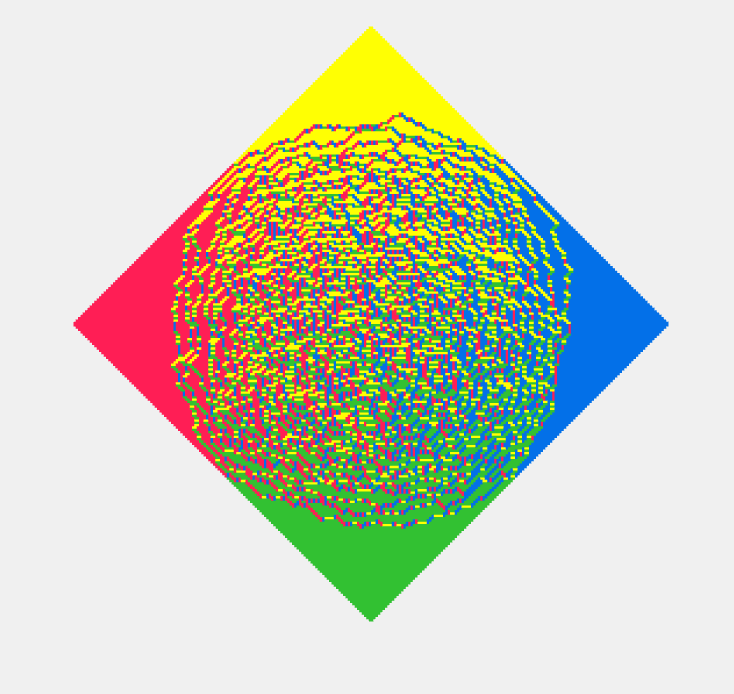}
    \caption{Simulations of the uniform Aztec diamond of sizes $n=16, \, 32, \, 64, \, 128$ to illustrate the Arctic Circle Theorem. Simulations were produced using the domino shuffling tool, \protect\url{https://lpetrov.cc/domino/}, created by Leonid Petrov.}
    \label{fig:actic-cicle-theorem}
\end{figure}

Domino tilings are a subset of a class of models known as \textit{dimer models}.\footnote{For a general primer on dimers see \cite{Ken2009lectures}.} A dimer model is defined on a graph and the possible configurations are known as \textit{dimer coverings}. A dimer covering is a subsets of the edges that match every vertex to exactly one neighbor. In graph theory, dimer coverings are also referred to as \textit{perfect matchings} of the graph. The ideas behind dimer models were initially introduced by Kasteleyn \cite{Kas61} and Temperley and Fisher \cite{TF61}. Associated to the dimer model is a natural Gibbs measure which can be interpreted in the following manner: to each edge of the graph we assign a weight (typically a positive, real number) and the probability of selecting any dimer covering is proportional to the product of the edge weights. The uniform tiling model is equivalent to all edge weights being equal. Kenyon, Okounkov, and Sheffield showed that there are three types of Gibbs measures that can appear in dimer model with doubly periodic edge weights \cite{KOS03}. These \textit{macroscopic regions} are classified as: frozen, rough, and smooth.\footnote{In the literature these regions are also referred to as solid, liquid, and gas, respectively.} In the frozen region dimers are deterministic. In the rough region dimer correlations decay polynomially with distance, while in the smooth region dimer correlations decay exponentially with distance. Not all dimer models exhibit smooth regions, however most exhibit rough regions.

\begin{figure}[H]
    \centering
    \begin{tikzpicture}[scale=0.57]
    \foreach \t in {1,3,5,7}
        {
        \draw[semithick] (0,\t) -- (\t,0);
        \draw[semithick] (\t,8) -- (8,\t);
        \draw[semithick] (\t,0) -- (8,8-\t);
        \draw[semithick] (0,\t) -- (8-\t,8);
        }
    \foreach \j in {0,1,2,3,4}
        \foreach \k in {0,1,2,3}
            {
            \filldraw[black] (2*\j,2*\k+1) circle (2pt);
            }
    \foreach \j in {0,1,2,3}
        \foreach \k in {0,1,2,3,4}
            {
            \filldraw[color=black,fill=white] (2*\j+1,2*\k) circle (2pt);
            }
    \draw[line width=0.2cm,blue,opacity=0.5] (0,7) -- (1,8);
    \draw[line width=0.2cm,blue,opacity=0.5] (0,5) -- (1,6);
    \draw[line width=0.2cm,blue,opacity=0.5] (2,7) -- (3,8);
    \draw[line width=0.2cm,blue,opacity=0.5] (2,5) -- (3,6);
    \draw[line width=0.2cm,blue,opacity=0.5] (4,7) -- (5,8);
    \draw[line width=0.2cm,blue,opacity=0.5] (6,5) -- (7,6);
    \draw[line width=0.2cm,red,opacity=0.5] (7,0) -- (8,1);
    \draw[line width=0.2cm,red,opacity=0.5] (5,0) -- (6,1);
    \draw[line width=0.2cm,red,opacity=0.5] (3,4) -- (4,5);
    \draw[line width=0.2cm,red,opacity=0.5] (5,6) -- (6,7);
    \draw[line width=0.2cm,red,opacity=0.5] (7,4) -- (8,5);
    \draw[line width=0.2cm,red,opacity=0.5] (7,2) -- (8,3);
    \draw[line width=0.2cm,green,opacity=0.5] (0,1) -- (1,0);
    \draw[line width=0.2cm,green,opacity=0.5] (2,1) -- (3,0);
    \draw[line width=0.2cm,green,opacity=0.5] (4,3) -- (5,2);
    \draw[line width=0.2cm,green,opacity=0.5] (0,3) -- (1,2);
    \draw[line width=0.2cm,yellow,opacity=0.5] (7,8) -- (8,7);
    \draw[line width=0.2cm,yellow,opacity=0.5] (3,2) -- (4,1);
    \draw[line width=0.2cm,yellow,opacity=0.5] (1,4) -- (2,3);
    \draw[line width=0.2cm,yellow,opacity=0.5] (5,4) -- (6,3);
    \end{tikzpicture}
    \hspace{0.25cm}
    \includegraphics[scale=0.5,angle=90]{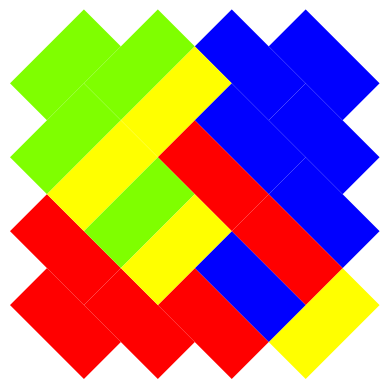}
    \caption{A dimer covering of the size 4 Aztec diamond and the related tiling picture.}
    \label{fig:AztecDiamondExamples}
\end{figure}

A key aspect towards understanding correlations of the Aztec diamond is the fact that the model is a \textit{determinantal point process}. One approach to computing correlations is via \textit{Kasteleyn theory}. The \textit{inverse Kasteleyn matrix} for the Aztec diamond with two-periodic weights was originally computed by Chhita and Young in \cite{CY14}. Further asymptotics of this model have been studied in \cite{CJY15,CJ16,BCJ18} among others. The two-periodic Aztec diamond is, notably, the simplest Aztec diamond model which exhibits all three macroscopic regions. 

Over the past decade there have been many advancement towards understanding general weights on the Aztec diamond. The correlation kernel for general periodic weightings was derived in the recent work of Berggren and Borodin \cite{BB23}. This analysis included asymptotics of the model. Additionally in \cite{BdT24}, Boutillier and de Tili\`{e}re compute the inverse Kasteleyn for Fock's weights on the Aztec diamond. This is the first result towards understanding non-periodic models, as Fock's dimer model is only quasi-periodic. 

This work is a follow up to \cite{S25} where an Aztec diamond dimer model with a non-periodic weighting scheme was analyzed. In particular, the \textit{split} two-periodic Aztec diamond divides the Aztec diamond into two equal regions and each region is assigned a different two-periodic weighting. The line where the change of weight occurs is referred to as the \textit{interface} of the model. In \cite{S25} a correlation kernel for the model was derived by extending methods defined in \cite{BD19}. The work also detailed the macroscopic regions of the model in the scaling limit. 

In the leading order, the local asymptotics of the split two-periodic model are determined only by the two-periodic weighting on the side of the interface where the coordinate lies. This means that the boundary between the macroscopic regions can be determine by computing the boundary in the separate two-periodic cases and then using the interface to ``glue'' them together.\footnote{In general, the two boundary curves will not meet up along the interface, however there will possibly be segments where a smooth region meets with a rough region at the interface.} Example simulations of the split two-periodic model are given in Figure \ref{fig:split-examples}. The earlier referenced work goes on to detail the regions of the split two-periodic model where the second order behavior changes from the typical two-periodic case. In particular, regions close to the \textit{half-cusps} that lie along the interface may have a different second order term. This poses interesting questions about the local behavior near these regions of the boundary.\footnote{However, these questions have yet to be addressed.}

\begin{figure}[H]
    \centering
    \begin{subfigure}[t]{0.3\textwidth}
        \includegraphics[width=\textwidth]{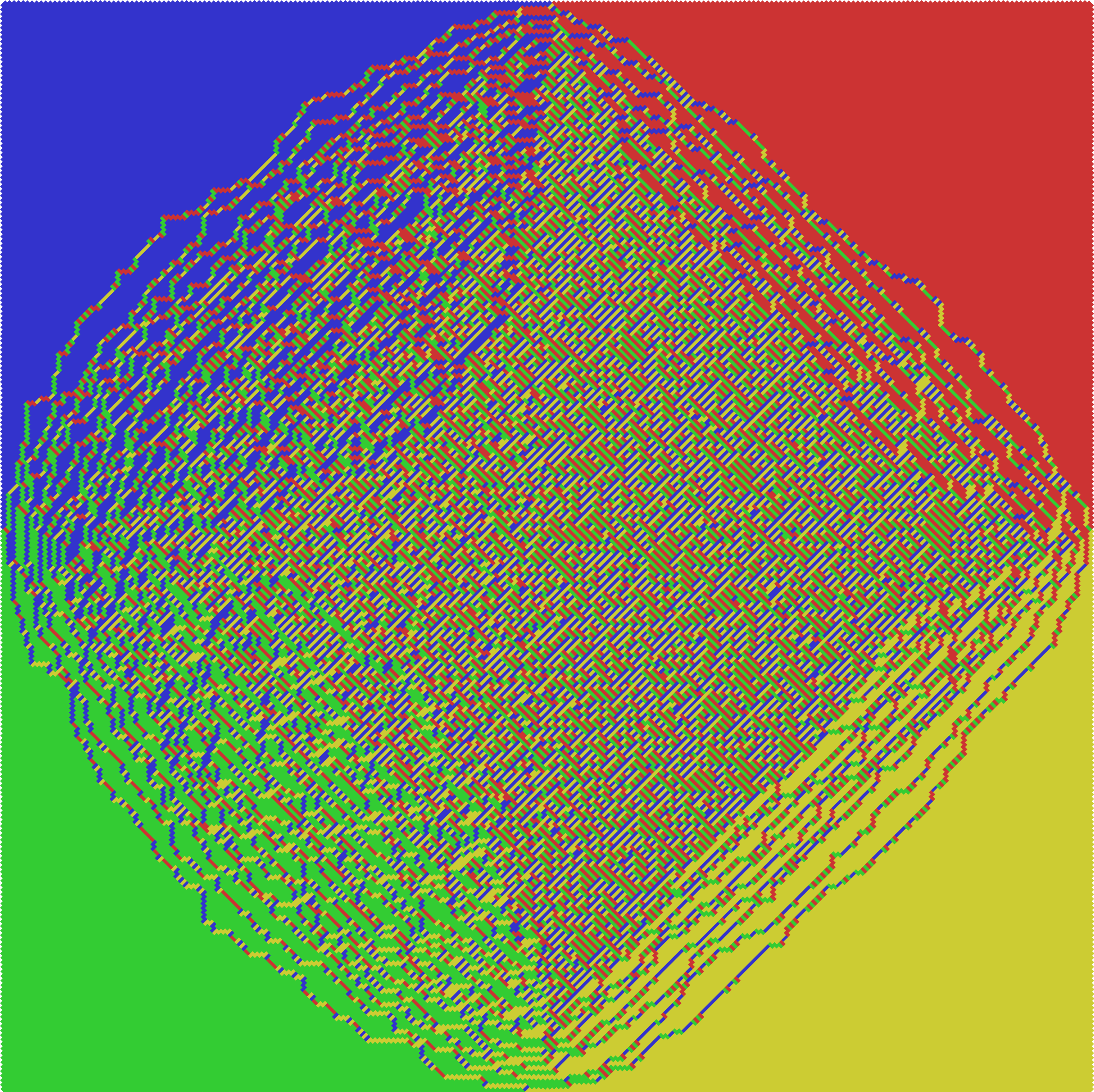}
        \caption{Split two-periodic model with parameters $\alpha = 1/2$ and $\beta=1/5$.}
    \end{subfigure}
    \hspace{1cm}
    \begin{subfigure}[t]{0.3\textwidth}
        \includegraphics[width=\textwidth]{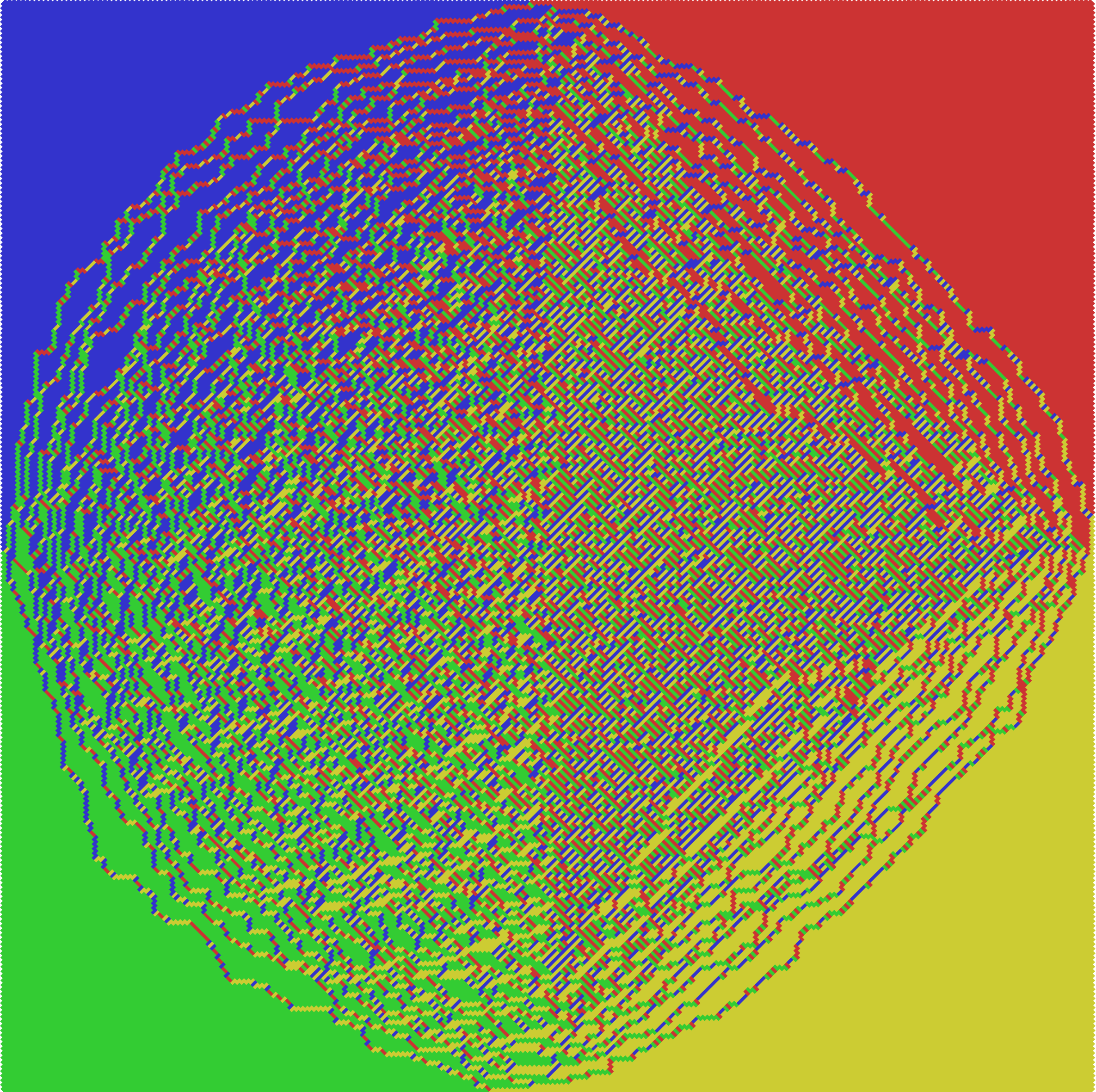}
        \caption{Split two-periodic model with parameters $\alpha = 1$ and $\beta=1/3$.}
    \end{subfigure}
    \caption{Examples of the split two-periodic Aztec diamond model analyzed in \cite{S25}. All simulations throughout this work generated by modifying code provided by Sunil Chhita and Leonid Petrov.}
    \label{fig:split-examples}
\end{figure}

This paper generalizes the split two-periodic model by allowing the interface to occur at any asymptotic coordinate. The main result of this work is Theorem \ref{theorem:kernel}, which provides an integral form of the correlation kernel that is suitable for asymptotic analysis. The proof of this theorem extends the methods of Berggren and Duits \cite{BD19} and is carried out in Section \ref{section:kernel-proof}. Figures \ref{fig:t-split-examples-1} and \ref{fig:t-split-example-2} show example simulations of the $t$-split model under various conditions. 

\begin{figure}[H]
    \centering
    \begin{subfigure}[t]{0.3\textwidth}
        \includegraphics[width=\textwidth]{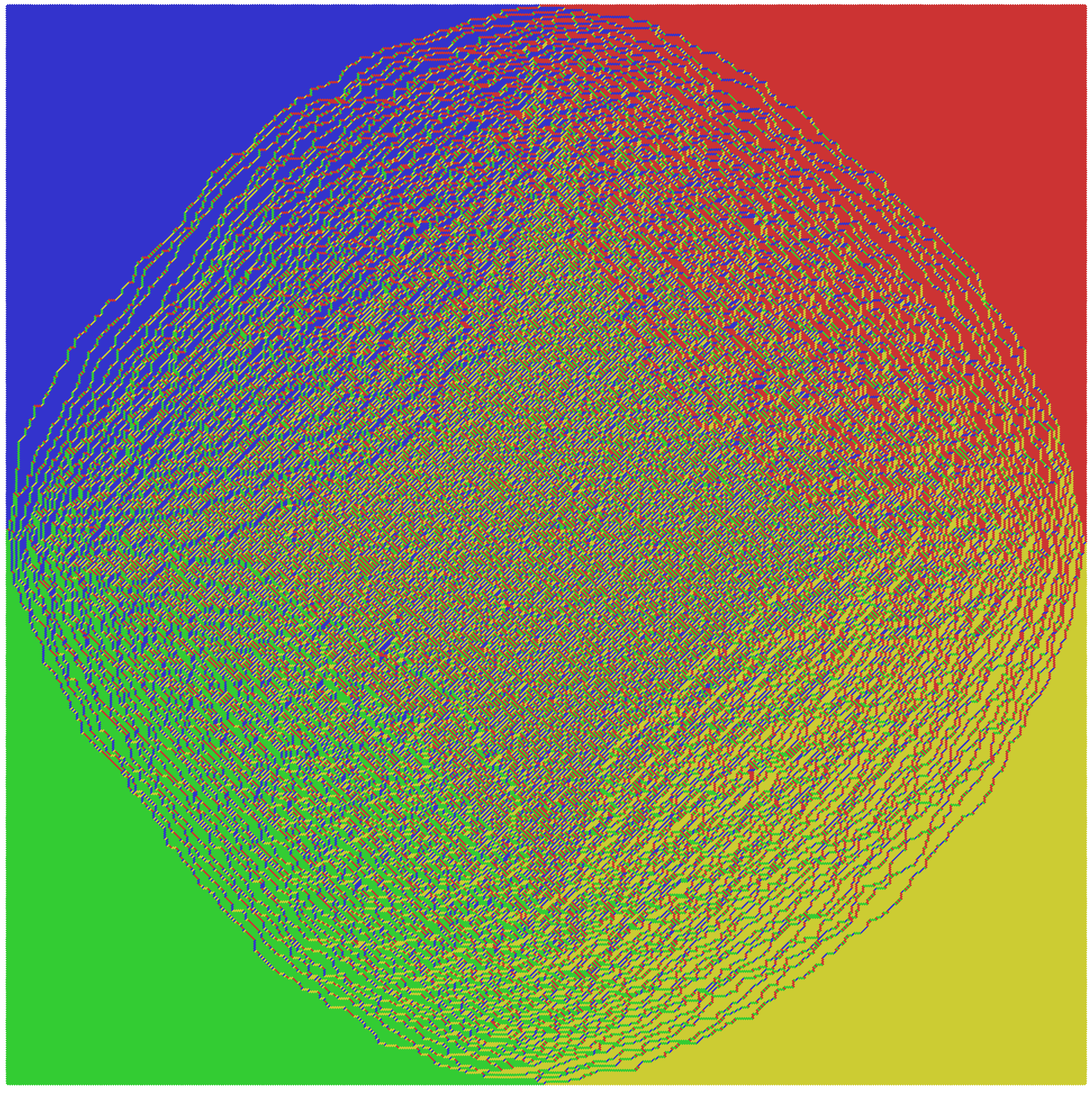}
        \caption{The dominant (right) side of the model has two-periodic parameter $1/2$.}
    \end{subfigure}
    \hspace{1cm}
    \begin{subfigure}[t]{0.3\textwidth}
        \includegraphics[width=\textwidth]{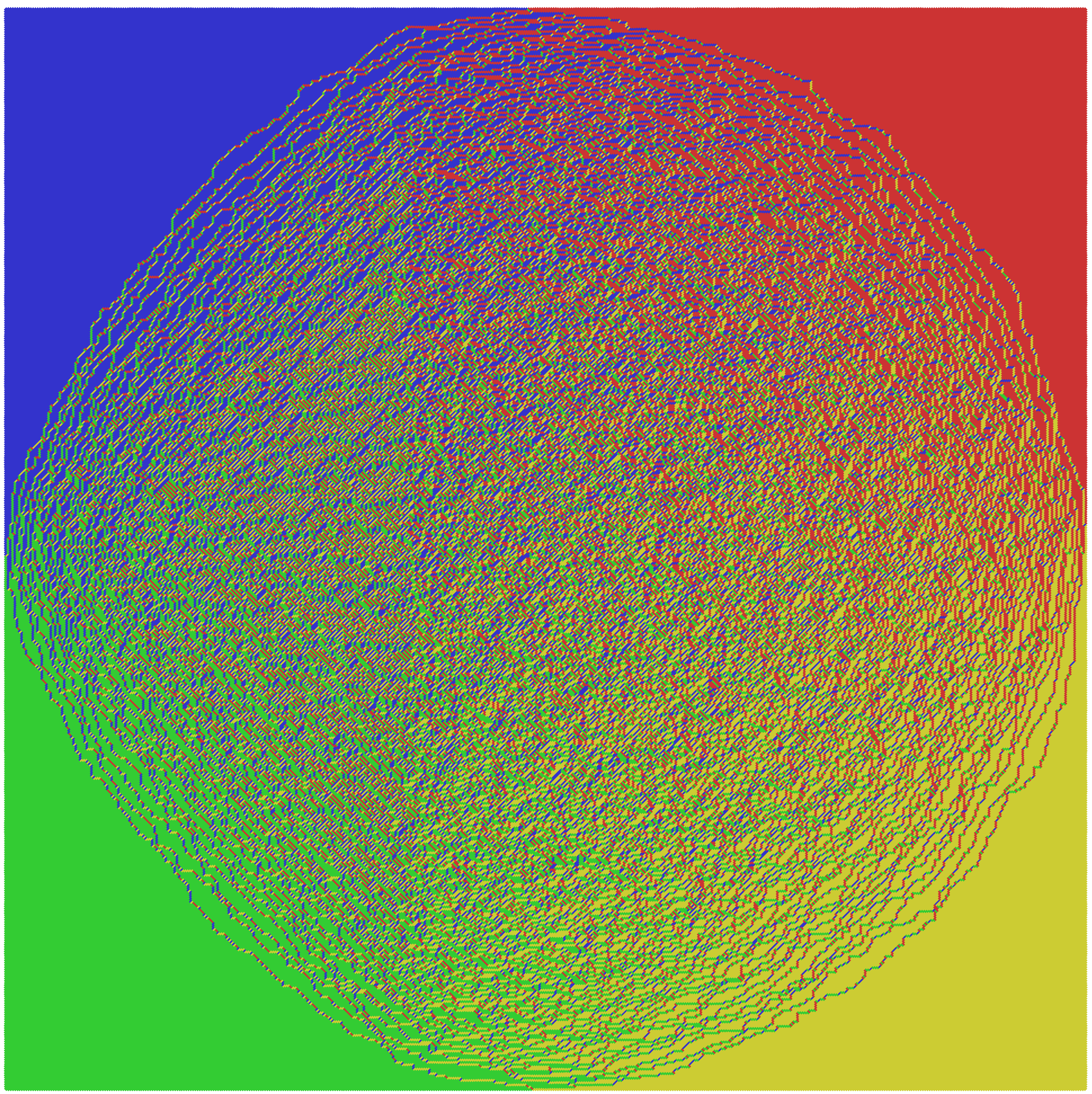}
        \caption{The dominant side has uniform weights.}
    \end{subfigure}
    \caption{Example simulations of the $t$-split two-periodic Aztec diamond model, where $t = 2/5$. In both example the two-periodic parameter on the left is $1/5$, however the limit shapes differ significantly.}
    \label{fig:t-split-examples-1}
\end{figure}

The remainder of the results pertain to the asymptotic analysis of the $t$-split model. When the interface is off-center, the model has a \textit{dominant} side and an \textit{non-dominant} side, each with a corresponding weighting. In this work, the dominant side is always located to the right of the interface. The limit shape of the dominant side of the model is exactly the limit shape of the two-periodic model with the same weighting; this is precisely stated in Proposition \ref{prop:right-side-asymptotics} in Section \ref{section:asymptoticresults}. The non-dominant side of the model exhibits a limit shape that can not be directly related to the two-periodic model. In this paper, we do not prove the limit shape for the non-dominant side of the model in general; however, we conjecture a way to describe it in Conjecture \ref{conj:left-asymptotics}. To describe the limit shape of the non-dominant side of the model, we propose a function that depends on the asymptotic location and whose critical points determine the macroscopic behavior at that location. Similar methods were used in \cite{DK21} to describe the limit shape of the two-periodic model and the necessary background information is given here in Section \ref{section:asymptoticresults}. An example simulation of the $t$-split model, alongside its conjectured limit shape appear in Figure \ref{fig:t-split-example-2}.

\begin{figure}[h]
    \centering
    \begin{subfigure}[t]{0.3\textwidth}
        \includegraphics[width=\textwidth]{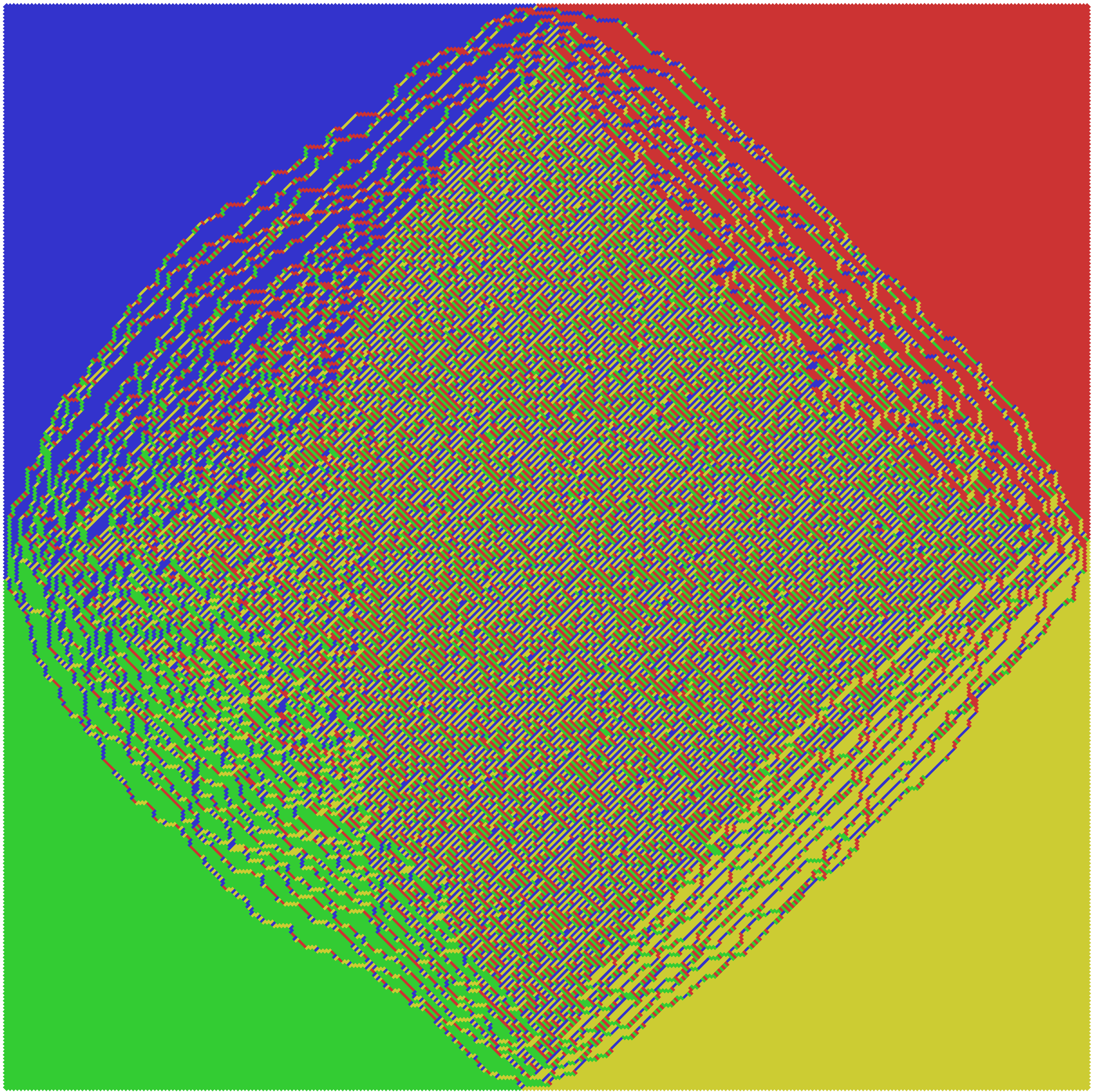}
        \caption{Simulation of the model where $n = 480$.}
        \label{fig:t-split-sim-1}
    \end{subfigure}
    \hspace{1cm}
    \begin{subfigure}[t]{0.3\textwidth}
        \includegraphics[width=\textwidth]{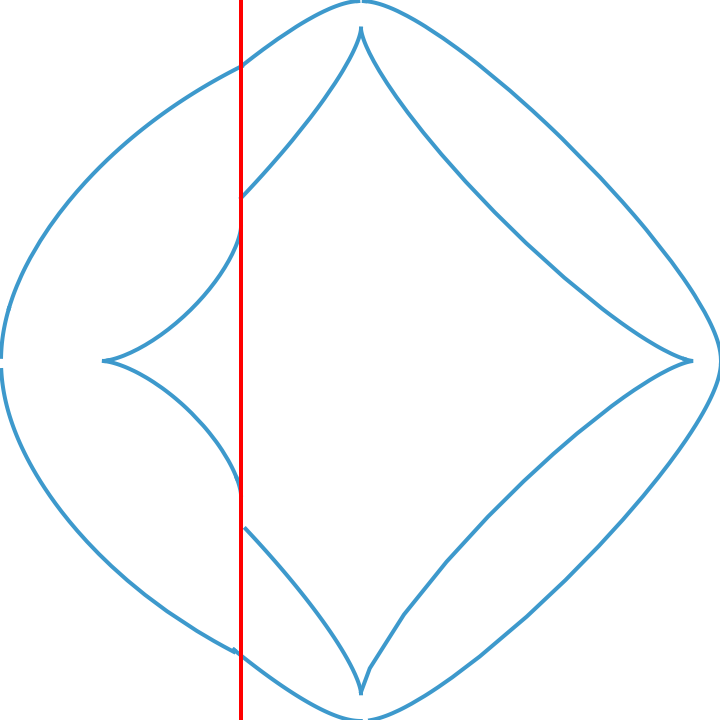}
        \caption{Limit shape produced via Mathematica.}
        \label{fig:t-split-limit}
    \end{subfigure}
    \caption{A simulation of the $t$-split two-periodic Aztec diamond alongside the partially conjectured limit shape. The example has parameters. $\alpha = 1/2$, $\beta = 1/5$ and $t = 1/3$.}
    \label{fig:t-split-example-2}
\end{figure}

Given the conjecture, numerical analysis, and the results from simulations, the behavior on the non-dominant side of the $t$-split model appears to be significantly richer than the split model. The limit shape on the non-dominant side depends on \textit{both} two-periodic parameters as well as the asymptotic location of the interface. Depending on these parameters, there may or may not be a smooth region to the left of the interface. This is illustrated in Figure \ref{fig:t-split-examples-1}.

This work fully proves the limit shape of the model in the case where the dominant side two-periodic parameter $\beta= 0$. This limit was first studied by Johansson and Mason in \cite{JM23} for the two-periodic version of the model.\footnote{We should note that Johansson and Mason analyze the model in a case that translates to $\beta = O(n^{-p})$, where we are only interested in $\beta \to 0$ independent of $n$.} For the two-periodic model, taking this limit (independent of $n$) results in the elimination of the rough region and creation of a frozen-smooth boundary. Since the dominant side of the $t$-split model behaves, in the limit, exactly like its two-periodic counterpart, we see only a frozen and smooth region to the right of the interface. We then prove that the limit shape of the non-dominant side of the model becomes half of a rescaled two-periodic Aztec diamond. An example of $\beta = 0$ is shown in Figure \ref{fig:b=0-example}.
 
\begin{figure}[h]
    \centering
    \begin{subfigure}[t]{0.4\textwidth}
        \includegraphics[width=\textwidth]{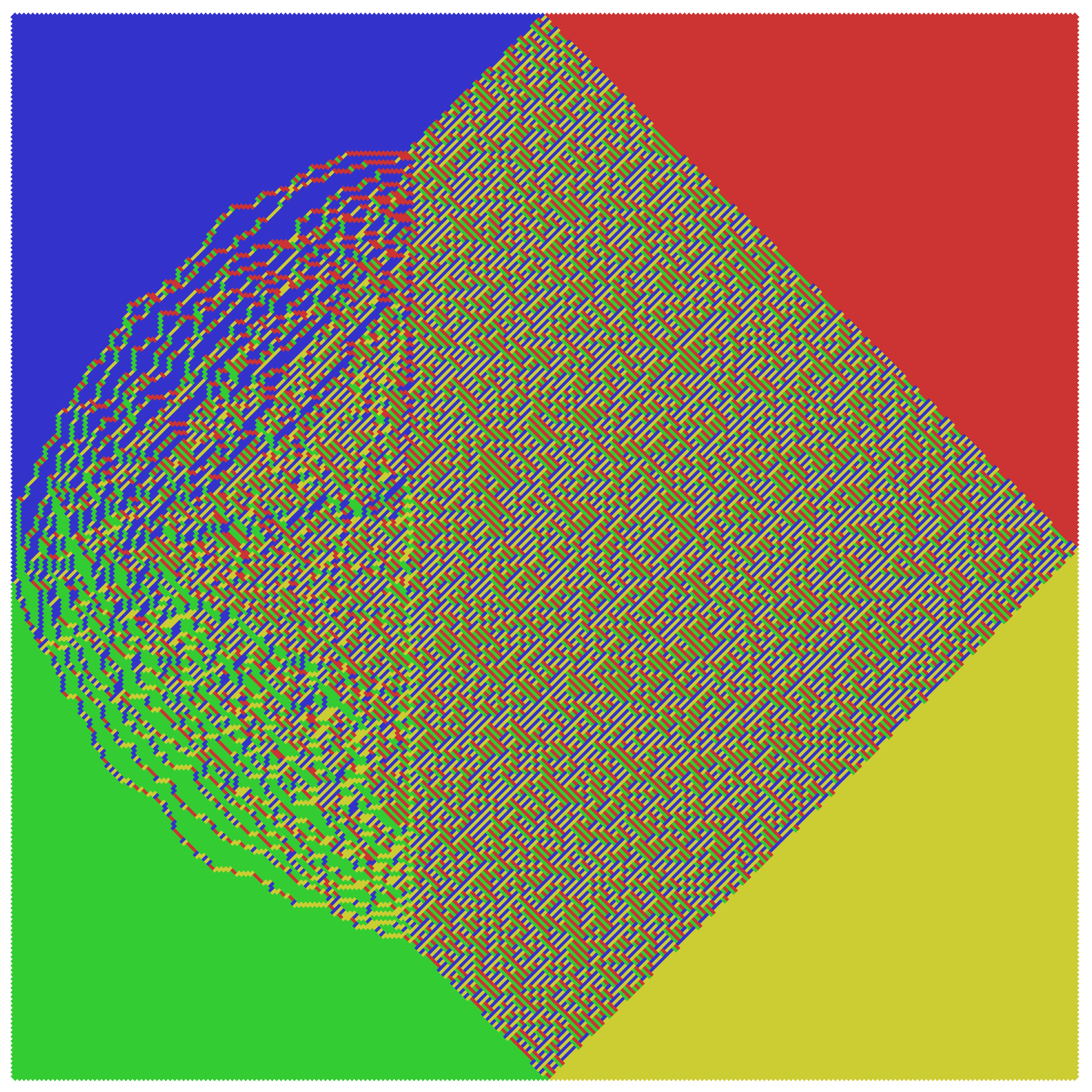}
    \end{subfigure}
    \caption{Simulation of the $t$ split two-periodic Aztec diamond with $\alpha = 1/2$, $\beta = 0$, and $t = 1/3$.}
    \label{fig:b=0-example}
\end{figure}

\subsection{Definition of the Model} \label{section:t-split-def}

We will define the model in the dimer setting, so to start we will detail the coordinate convention for the vertices and edges of the Aztec diamond graph of size $n$. The vertices have a natural bipartite structure $\tV_n^{\text{Az}} = \tW^{\text{Az}}_n \sqcup \tB^{\text{Az}}_n$ where
\begin{equation} \label{eq:coordstart}
    \tW^{\text{Az}}_n = \Big\{ (2j+1,2k) : 0 \leq j \leq n-1 \text{ and } 0 \leq k \leq n \Big\}
\end{equation}
and
\begin{equation} \label{eq:coordend}
    \tB^{\text{Az}}_n = \Big\{ (2j,2k+1) : 0 \leq j \leq n \text{ and } 0 \leq k \leq n-1 \Big\}.
\end{equation}
The edges are given by
\begin{multline} \label{eq:edgecoord}
    \tE^{\text{Az}}_n = \Big\{ ((2j+1, 2k), (2j+1\pm 1, 2k+1)) : 0 \leq j \leq n-1, 0 \leq k \leq n-1 \Big\} \\
    \cup \Big\{ ((2j+1, 2k), (2j+1\pm 1, 2k-1)) : 0 \leq j \leq n-1, 1\leq k \leq n \Big\}.
\end{multline}
Note that this coordinate convention expresses the rotated convention of the Aztec diamond depicted in Figure \ref{fig:AztecDiamondExamples}. Since our weights are two-periodic, we will assume $n = 2N$. The $t$-split two-periodic Aztec diamond has an interface at the vertical line $x = 4tN$. We will restrict to cases where $tN$ is an integer, so that interface falls between the weight repetitions. Let $b = (b_x,b_y) \in \tB^{\text{Az}}_n$ and $w = (w_x,w_y)\in \tW^{\text{Az}}_n$. We say that $b$ and $w$ are neighbors, and write $b \sim w$, if there is an edge between the two vertices. We will use the notation $\text{wt}(b,w)$ to denote the weight of the edge connecting the vertices $b$ and $w$. Note that $\text{wt}(b,w) = \text{wt}(w,b)$. The weight function for the $t$-split two-periodic Aztec diamond is 
\begin{equation} \label{eq:edgewts}
    \text{wt}(b,w) = \begin{cases}
        \alpha^2 & \text{if } w_x \equiv 1 \bmod{4}, \, w_y \equiv 0 \bmod{4}, \, w-b = (\pm1,-1), \text{ and } w_x < 4tN,\\
        \alpha^{-2} & \text{if }  w_x \equiv 1 \bmod{4}, \, w_y \equiv 2 \bmod{4}, \, w-b = (\pm1,-1), \text{ and } w_x < 4tN,\\
        \beta^2 & \text{if } w_x \equiv 1 \bmod{4}, \, w_y \equiv 0 \bmod{4}, \, w-b = (\pm1,-1), \text{ and } w_x > 4tN,\\
        \beta^{-2} & \text{if } w_x \equiv 1 \bmod{4}, \, w_y \equiv 2 \bmod{4}, \, w-b = (\pm1,-1), \text{ and } w_x > 4tN,\\
        1 & \text{if } w_x \equiv 1 \bmod{4}, \text{ and } w-b = (\pm1,1),\\
        1 & \text{if } w_x \equiv 3 \bmod{4}, \text{ and } w-b = (\pm1,\pm1),\\
        0 & \text{otherwise}.
    \end{cases}
\end{equation}
where $0<\alpha,\beta<1$ are constants that we refer to as the \textit{two-periodic parameters}. If $\alpha = \beta$, this model recovers the usual two-periodic Aztec diamond. If $\alpha = \beta = 1$ we recover the uniform Aztec diamond; however we will generally assume that the constants are not equal to 1. The case of $t = 1/2$ was studied in \cite{S25}. In the present work we will primarily focus on the case $t < 1/2$.

\begin{figure}[h]
    \centering
    \begin{tikzpicture}[scale=0.65]
    \foreach \t in {1,3,5,7,9,11,13,15}
        {
        \draw[semithick] (0,\t) -- (\t,0);
        \draw[semithick] (\t,16) -- (16,\t);
        \draw[semithick] (\t,0) -- (16,16-\t);
        \draw[semithick] (0,\t) -- (16-\t,16);
        }

    \foreach \j in {0,1,2,3,4,5,6,7,8}
        \foreach \k in {0,1,2,3,4,5,6,7}
            {
            \filldraw[black] (2*\j,2*\k+1) circle (2pt);
            }

    \foreach \j in {0,1,2,3,4,5,6,7}
        \foreach \k in {0,1,2,3,4,5,6,7,8}
            {
            \filldraw[color=black,fill=white] (2*\j+1,2*\k) circle (2pt);
            }

    \foreach \j in {0,2,4,6,8,10,12,14,16}
        {
        \node at (\j,-1.2) {$\j$};
        \node at (-1.2,\j) {$\j$};
        }

    \draw[dashed, red, thick] (4,-0.5) -- (4,16.5);

    \foreach \k in {2,4,6,8}
        {
        \node[font=\tiny,blue!70!black,fill=white,inner sep=0.5pt] at (0.5,2*\k-0.5) {$\alpha^{2}$};
        \node[font=\tiny,blue!70!black, fill=white, inner sep=0.5pt] at (1.5,2*\k-0.5) {$\alpha^{2}$};
        }
    \foreach \k in {1,3,5,7}
        {
        \node[font=\tiny,blue!70!black, fill=white, inner sep=0.5pt] at (0.5,2*\k-0.5) {$\alpha^{-2}$};
        \node[font=\tiny,blue!70!black, fill=white, inner sep=0.5pt] at (1.5,2*\k-0.5) {$\alpha^{-2}$};
        }
    \foreach \k in {0,1,2,3,4,5,6,7}
        {
        \node[font=\tiny,blue!70!black, fill=white, inner sep=0.5pt] at (0.5,2*\k+0.5) {$1$};
        \node[font=\tiny,blue!70!black, fill=white, inner sep=0.5pt] at (1.5,2*\k+0.5) {$1$};
        }

    \foreach \px in {5,9,13}
        {
        \foreach \k in {2,4,6,8}
            {
            \node[font=\tiny,blue!70!black, fill=white, inner sep=0.5pt] at (\px-0.5,2*\k-0.5) {$\beta^{2}$};
            \node[font=\tiny,blue!70!black, fill=white, inner sep=0.5pt] at (\px+0.5,2*\k-0.5) {$\beta^{2}$};
            }
        \foreach \k in {1,3,5,7}
            {
            \node[font=\tiny,blue!70!black, fill=white, inner sep=0.5pt] at (\px-0.5,2*\k-0.5) {$\beta^{-2}$};
            \node[font=\tiny,blue!70!black, fill=white, inner sep=0.5pt] at (\px+0.5,2*\k-0.5) {$\beta^{-2}$};
            }
        \foreach \k in {0,1,2,3,4,5,6,7}
            {
            \node[font=\tiny,blue!70!black, fill=white, inner sep=0.5pt] at (\px-0.5,2*\k+0.5) {$1$};
            \node[font=\tiny,blue!70!black, fill=white, inner sep=0.5pt] at (\px+0.5,2*\k+0.5) {$1$};
            }
        }

    \foreach \px in {3,7,11,15}
        {
        \foreach \k in {0,1,2,3,4,5,6,7}
            {
            \node[font=\tiny,blue!70!black, fill=white, inner sep=0.5pt] at (\px-0.5,2*\k+0.5) {$1$};
            \node[font=\tiny,blue!70!black, fill=white, inner sep=0.5pt] at (\px+0.5,2*\k+0.5) {$1$};
            }
        \foreach \k in {1,2,3,4,5,6,7,8}
            {
            \node[font=\tiny,blue!70!black, fill=white, inner sep=0.5pt] at (\px-0.5,2*\k-0.5) {$1$};
            \node[font=\tiny,blue!70!black, fill=white, inner sep=0.5pt] at (\px+0.5,2*\k-0.5) {$1$};
            }
        }

    \end{tikzpicture}
    \caption{The $t$-split two-periodic Aztec diamond of size $n = 2N = 8$ with $t = 1/4$.
    Coordinates are as in \eqref{eq:coordstart}--\eqref{eq:coordend} and weights
    as in \eqref{eq:edgewts}.}
    \label{fig:AztecDiamondExample}
\end{figure}

In \cite{BD19}, Berggren and Duits show that the Aztec diamond is equivalent to a certain non-intersecting paths process. Since the present work builds on their results, the correlation kernel is presented using the coordinate convention from the non-intersecting paths process, which differs from the coordinates presented above. To understand the specific relationship between these models, the reader should refer to \cite{BD19,S25,CD22}. In this work, we simply define a coordinate transformation on the Aztec diamond that will allow us to immediately relate the statement of the kernel to the diamond. 

To transform the coordinates, we start by moving the white vertices of the Aztec diamond to be level with the row of black vertices immediately below them. Consider $w \in \tW^\text{Az}_n$ with initial coordinate $(2j+1,2k)$; it now has the coordinate $(2j+1,2k-1)$. Next, to any vertex $v = (v_x,v_y)$, we make the following change to the $y$-coordinate, 
\[
v_y \mapsto \frac{1}{2}v_y - 2N -\frac{1}{2}.
\]
This transformation is depicted in Figure \ref{fig:AD-alt}. For the Aztec diamond of size $n$ the vertex sets is now expressed at the following
\begin{equation} 
    \tW^{\text{Az-Alt}}_{n} = \Big\{ (2j+1, k) : 0 \leq j < n \text{ and } -n-1 \leq k \leq -1 \Big\},
\end{equation}
and
\begin{equation}
    \tB^{\text{Az-Alt}}_{n} = \Big\{ (2j, k) : 0 \leq j < n \text{ and } -n \leq k \leq -1 \Big\}
\end{equation}
The edges are now given by
\begin{multline} 
    \tE^{\text{Az-Alt}}_{n} = \Big\{\left((2j+1, k),(2j+1 \pm 1, k)\right) : 0 \leq j < n \text{ and } -n \leq k \leq -1\Big\} \\
    \cup \Big\{\left((2j+1, k),(2j + 1 \pm 1, k+1)\right) : 0 \leq j < n \text{ and } -n-1 \leq k \leq -2 \Big\}
\end{multline}
Throughout Section \ref{section:kernelstatement} and the remainder of this paper, we use these transformed coordinates.

\begin{figure}[h]
    \centering
    \begin{tikzpicture}[scale=0.6]
    \foreach \t in {1,3,5,7}
        {
        \draw[semithick] (0,\t) -- (\t,0);
        \draw[semithick] (\t,8) -- (8,\t);
        \draw[semithick] (\t,0) -- (8,8-\t);
        \draw[semithick] (0,\t) -- (8-\t,8);
        }
    \foreach \j in {0,1,2,3,4}
        \foreach \k in {0,1,2,3}
            {
            \filldraw[black] (2*\j,2*\k+1) circle (2pt);
            }
    \foreach \j in {0,1,2,3}
        \foreach \k in {0,1,2,3,4}
            {
            \filldraw[color=black,fill=white] (2*\j+1,2*\k) circle (2pt);
            }
    \foreach \j in {0,1,2,3,4,5,6,7,8}
        {
        \node at (\j,-1) {$\j$};
        \node at (-1,\j) {$\j$};
        }
    \end{tikzpicture}
    \hspace{1cm}
    \begin{tikzpicture}[scale=0.6]
    \foreach \t in {1,3,5,7}
        {
        \draw[semithick] (0,\t) -- (8,\t);
        \foreach \k in {0,2,4,6}
            {
            \draw[semithick] (\k,\t) -- (\k+1,\t-2);
            }
        \foreach \k in {2,4,6,8}
            {
            \draw[semithick] (\k,\t) -- (\k-1,\t-2);
            }
        }
    \foreach \j in {0,1,2,3,4}
        \foreach \k in {0,1,2,3}
            {
            \filldraw[black] (2*\j,2*\k+1) circle (2pt);
            }
    \foreach \j in {0,1,2,3}
        \foreach \k in {0,1,2,3,4}
            {
            \filldraw[color=black,fill=white] (2*\j+1,2*\k-1) circle (2pt);
            }
    \foreach \j in {0,1,2,3,4,5,6,7,8}
        {
        \node at (\j,-2) {$\j$};
        }
    \foreach \j in {-1,-2,-3,-4,-5}
        {
        \node at (-1,2*\j+9) {$\j$};
        }
    \end{tikzpicture}
    \caption{The above depicts the coordinate changes on the Aztec diamond of size $4$.}
    \label{fig:AD-alt}
\end{figure}

\section{Results} 

\subsection{$t$-Split Correlation Kernel} \label{section:kernelstatement}
The goal of this section is to state the correlation kernel for the $t$-split two-periodic Aztec diamond under the assumption that $t < 1/2$. 
\begin{remark}
    Because of the interface, there is not a symmetry argument between the two discrete models, $t < 1/2$ and $t > 1/2$, when the weighting scheme are swapped. In \cite{S25}, there is a description between the asymptotic symmetry of the models. We conjecture the same is true for this generalize model, however at this time it has not been carefully checked. 
\end{remark}
Before we are able to state the correlation kernel, we must first define some necessary preliminary functions. Most of the notation here is unchanged from \cite{S25}. Let
\begin{equation}
    \Phi_\eps(z) = \frac{1}{(1-z^{-1})^2}
    \begin{pmatrix}1 & \eps^2 z^{-1} \\ \eps^{-2} & 1\end{pmatrix}^2
    \begin{pmatrix}1 & z^{-1} \\ 1 & 1 \end{pmatrix}^2.
\end{equation}
The origins of this matrix and how it relates to the model can be found in \cite[Theorem 5.2]{BD19}. Additionally, for the $t$-split two periodic Aztec diamond we define 
\begin{equation} \label{eq:phi-m'-to-m}
    \phi_{m'\to m, N}(z) = 
    \begin{cases}
        \Phi_\alpha(z)^{m-m'} & m' < m < tN, \\
        \Phi_\alpha(z)^{tN-m'}\Phi_\beta(z)^{m-tN} & m' < tN < m, \\
        \Phi_\beta(z)^{m-m'} & tN < m' < m.
    \end{cases}
\end{equation}
The eigenvalues of $\Phi_\eps(z)$ are 
\begin{equation} \label{eq:r1}
    r_{\eps,1}(z) = \frac{1}{(z-1)^2}\left((z+1)^2+2z(\eps^2 + \eps^{-2}) + 2 (\eps + \eps^{-1}) \sqrt{z^3+(\eps^2+\eps^{-2})z^2+z} \right)
\end{equation} 
and
\begin{equation} \label{eq:r2}
    r_{\eps,2}(z) = \frac{1}{(z-1)^2}\left((z+1)^2+2z(\eps^2 + \eps^{-2}) - 2 (\eps + \eps^{-1}) \sqrt{z^3+(\eps^2+\eps^{-2})z^2+z} \right).
\end{equation} 
We chose the branch cuts for the above to be $(-\infty,-\eps^{-2}] \cup [-\eps^2,0]$ and for $z > 0$ we choose the positive square root. We also adopt the notation
\begin{equation} \label{eq:f1}
    F_{\eps,1}(z) = E_\eps(z) \begin{pmatrix}1 & 0 \\ 0 & 0\end{pmatrix} E_\eps(z)^{-1} = \begin{pmatrix}
        \frac{1}{2} - \frac{z(\eps^2-1)}{2\sqrt{z(z+\eps^2)(1+\eps^2 z)}} & -\frac{\eps^2(z+1)}{2\sqrt{z(z+\eps^2)(1+\eps^2z)}} \\
        -\frac{z(z+1)}{2\sqrt{z(z+\eps^2)(1+\eps^2z)}} & \frac{1}{2} + \frac{z(\eps^2-1)}{2\sqrt{z(z+\eps^2)(1+\eps^2 z)}}
    \end{pmatrix} 
\end{equation} 
and 
\begin{equation} \label{eq:f2}
    F_{\eps,2}(z) = E_\eps(z) \begin{pmatrix}0 & 0 \\ 0 & 1\end{pmatrix} E_\eps(z)^{-1} = \begin{pmatrix}
        \frac{1}{2} + \frac{z(\eps^2-1)}{2\sqrt{z(z+\eps^2)(1+\eps^2 z)}} & \frac{\eps^2(z+1)}{2\sqrt{z(z+\eps^2)(1+\eps^2z)}} \\
        \frac{z(z+1)}{2\sqrt{z(z+\eps^2)(1+\eps^2z)}} & \frac{1}{2} - \frac{z(\eps^2-1)}{2\sqrt{z(z+\eps^2)(1+\eps^2 z)}}
    \end{pmatrix},
\end{equation}
which uses the same branch cuts as above. Lastly we define the function,
\begin{equation} \label{eq:g}
    g_{\alpha,\beta}(z) = \frac{2z(1+\alpha^2\beta^2)+\beta^2(z^2+1)+\alpha^2(z^2+1)}{4\sqrt{(z+\alpha^2)(1+\alpha^2z)(z+\beta^2)(1+\beta^2z)}}.
\end{equation}
Note that $g_{\alpha,\beta}(z) = g_{\beta,\alpha}(z)$, and $g_{\alpha,\alpha}(z) = 1/2$. Assuming $\beta > \alpha$, we define $g_{\alpha,\beta}(z)$ to have the branch cuts $[-\alpha^{-2},-\beta^{-2}] \cup [-\beta^2,-\alpha^2]$. Additionally we define
\begin{equation} \label{eq:c-def}
    c(z) = \frac{2}{1+2g_{\alpha,\beta}(z)}
\end{equation}
for brevity. We are now prepared to state the correlation kernel. Note that the correlation kernel is presented in the form of a $2\times 2$ matrix. 
\begin{theorem} \label{theorem:kernel}
    Let $-N \leq \xi,\xi' \leq -1$ and $0 < m, m' < N$. The correlation kernel $t$-split two-periodic Aztec diamond of size $2N$ with $t < 1/2$ can be written as a piecewise expression depending on whether $m' \leq tN$ or $m' > tN$. For $0 < m' \leq tN$ the kernel is:
    \begin{multline} \label{eq:kernel-left}
        \left[\K_N(4m',2\xi'+j;4m,2\xi+i)\right]_{i,j=0,1} = -\frac{\1_{m>m'}}{2\pi \ii} \oint_{\gamma_{0,1}}\frac{\dz}{z} z^{\xi'-\xi}\phi_{m'\to m,N}(z)\\
        +\frac{1}{(2\pi\ii)^2}\oint_{\gamma_1} \dw \oint_{\gamma_{0,1}} \frac{\dz}{z(z-w)}\frac{w^{\xi'+N}(z-1)^N}{z^{\xi+N}(w-1)^N}r_{\alpha,1}(w)^{tN-m'}F_{\alpha,1}(w) \\
        \times \Phi_{\beta}(w)^{(1/2-t)N} \Phi_{\beta}(z)^{(t-1/2)N}\Phi_{\alpha}(z)^{-tN}\phi_{0\to m,N}(z) \\
        + \frac{1}{(2\pi\ii)^2}\oint_{\gamma_1}\dw\oint_{\gamma_{0,1}} \frac{\dz}{z(z-w)}\frac{w^{\xi'+N}(z-1)^N}{z^{\xi+N}(w-1)^N}r_{\alpha,2}(w)^{tN-m'} c(w) F_{\alpha,2}(w) F_{\beta,1}(w)F_{\alpha,1}(w) \\
        \Phi_{\beta}(w)^{(1/2-t)N} \Phi_{\beta}(z)^{(t-1/2)N} \Phi_{\alpha}(z)^{-tN}\phi_{0\to m,N}(z).
    \end{multline}
    For $tN < m' < N$ the kernel is:
    \begin{multline} \label{eq:kernel-right}
        \left[\K_N(4m',2\xi'+j;4m,2\xi+i)\right]_{i,j=0,1} = - \frac{\1_{m>m'}}{2\pi\ii} \oint_{\gamma_{0,1}}\frac{\dz}{z} z^{\xi'-\xi} \phi_{m' \to m,N}(z) \\
        + \frac{1}{(2\pi\ii)^2}\oint_{\gamma_1} \dw \oint_{\gamma_{0,1}} \frac{\dz}{z(z-w)}\frac{w^{\xi'+N}(z-1)^N}{z^{\xi+N}(w-1)^N}r_{\beta,1}(w)^{N/2-m'} F_{\beta,1}(w)\\
        \Phi_{\beta}(z)^{(t-1/2)N}\Phi_{\alpha}(z)^{-tN}\phi_{0\to m,N}(z) \\
        + \frac{1}{(2\pi\ii)^2}\oint_{\gamma_1}\dw\oint_{\gamma_{0,1}} \frac{\dz}{z(z-w)}\frac{w^{\xi'+N}(z-1)^N}{z^{\xi+N}(w-1)^N}r_{\beta,1}(w)^{(2t - 1/2)N - m'}c(w)F_{\beta,1}(w)F_{\alpha,1}(w)F_{\beta,2}(w)\\
        \times \Phi_{\beta}(z)^{(t-1/2)N}\Phi_{\alpha}(z)^{-tN}\phi_{0\to m,N}(z). 
    \end{multline}
    In both cases, $\gamma_1$ is a simple, positively oriented contour surrounding $1$ but not $0$, while $\gamma_{0,1}$ is a simple, positively oriented contour surrounding both $0$ and $1$ with $\gamma_1$ in its interior.
\end{theorem}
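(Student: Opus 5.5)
We would follow the Berggren--Duits framework \cite{BD19} as adapted in \cite{S25}. Via the non-intersecting paths description, the $t$-split model is a determinantal process whose transition matrices are block Toeplitz with symbol $\phi_{m'\to m,N}$ from \eqref{eq:phi-m'-to-m}. \cite[Theorem 4.7]{BD19} then gives the kernel as the single integral $-\1_{m>m'}\oint \phi_{m'\to m,N}(z) z^{\xi'-\xi}\,\dz/z$ plus a double integral. That double integral involves the matrix-valued Riemann--Hilbert problem, or equivalently a Wiener--Hopf factorization, of the total symbol
\[
\phi_{0\to N,N}(z)\,\frac{(z-1)^N}{z^N}=\Phi_\alpha(z)^{tN}\Phi_\beta(z)^{(1-t)N}\frac{(z-1)^N}{z^N}.
\]
The first step is to write this formula down. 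The second is to use the fact that $\Phi_\alpha$ and $\Phi_\beta$ commute with their own powers but not with each other, so they must be diagonalised separately: $\Phi_\eps=r_{\eps,1}F_{\eps,1}+r_{\eps,2}F_{\eps,2}$.

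As in \cite{S25}, and following \cite{BD19}, we would avoid solving the Riemann--Hilbert problem directly. Instead we would exploit the freedom to insert a factorization $\phi = \phi_+\phi_-$ in which the $w$-integrand may contain any factor analytic inside $\gamma_1$ except at $1$. The idea is to split the symbol at a convenient intermediate point rather than at $m'$, and to choose that point symmetrically in the $\beta$-region, namely $N/2$. Writing
\[
\Phi_\beta^{(1-t)N}=\Phi_\beta^{(1/2-t)N}\,\Phi_\beta^{N/2},
\]
the factor $\Phi_\beta(z)^{(t-1/2)N}\Phi_\alpha(z)^{-tN}$ appears on the $z$ side, and $\Phi_\beta(w)^{(1/2-t)N}$ appears on the $w$ side.

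The key step is then to deform the $w$-contour. Where possible we replace $\Phi(w)^{k}$ by its eigenprojection expansion, and we discard the terms involving $r_{\eps,2}(w)^{\text{large}}$ or $r_{\eps,1}(w)^{-\text{large}}$ whose integrand has no pole inside $\gamma_1$ (it is analytic at $w=1$). This vanishing is checked exactly as in \cite[Section 4]{BD19}, using the order of the pole of $r_{\eps,1}$ at $1$ against $(w-1)^{-N}$ and $w^{\xi'+N}$.

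For $m'\le tN$ the $w$-side factor is $\Phi_\alpha(w)^{tN-m'}\Phi_\beta(w)^{(1/2-t)N}$. Expanding $\Phi_\alpha^{tN-m'}$ gives an $r_{\alpha,1}$ term, which is the second line of \eqref{eq:kernel-left}, and an $r_{\alpha,2}$ term. The $r_{\alpha,2}$ term does not vanish outright. It must be rewritten by projecting through $F_{\beta,1}$ and $F_{\beta,2}$, killing the $F_{\beta,2}$ part, and resolving the resulting geometric-series type relation:
\[
F_{\alpha,2}F_{\beta,1}F_{\alpha,1}\text{ has scalar coefficient }\operatorname{tr}(F_{\alpha,i}F_{\beta,j}),\ \text{and}\ \operatorname{tr}(F_{\alpha,1}F_{\beta,1})=\tfrac12+g_{\alpha,\beta}.
\]
Summing the resulting series of projections gives the prefactor $c(w)=2/(1+2g_{\alpha,\beta}(w))$. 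The case $m'>tN$ is handled analogously, with the roles reversed, and yields \eqref{eq:kernel-right}; the exponent $(2t-1/2)N-m'$ comes from combining $r_{\beta,1}^{N/2-m'}$ with $r_{\beta,1}^{-(1/2-t)N}$ and the reflected $\alpha$-contribution.

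The main obstacle is this middle step. It requires the algebra of the non-commuting projections, namely verifying identities such as
\[
F_{\alpha,1}F_{\beta,1}F_{\alpha,1}=(\tfrac12+g_{\alpha,\beta})F_{\alpha,1}
\]
from \eqref{eq:f1}--\eqref{eq:f2} and \eqref{eq:g}. It also requires justifying that each discarded term has no residue at $w=1$ uniformly in $\xi'$, $m'$. We would first verify the projection identities symbolically, then handle the residue bookkeeping term by term.
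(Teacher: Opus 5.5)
There is a genuine gap. What is missing is the analytic mechanism that justifies discarding terms and also generates the series in $F_{\alpha,2}F_{\beta,1}$, and the step you put in its place fails.

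You work directly with the unbiased symbol and treat the split at $N/2$ as a free choice of factorization. So for $m'\le tN$ your $w$-integrand is $\frac{w^{\xi'+N}}{(w-1)^N}\Phi_\alpha(w)^{tN-m'}\Phi_\beta(w)^{(1/2-t)N}$. But $\Phi_\eps$ and $\Phi_\eps^{-1}$ both have a double pole at $z=1$, which lies on the unit circle. Hence \cite[Theorem 3.1]{BD19} does not apply, and there is no Wiener--Hopf factorization for you to choose. The factorization of the biased symbol is fixed by its analyticity requirements and is not explicit.

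After biasing, $\phi_{m'\to N,N,a}(w)\widetilde{\phi}_{+,N,a}(w)^{-1}=\phi_{0\to m',N,a}(w)^{-1}\widetilde{\phi}_{-,N,a}(w)$ has a pole at $a^2$ inside $\gamma_a$ and a singularity at $a^{-2}$ outside it. These two points pinch the contour as $a\to1$, so substituting the limiting $\widetilde{\phi}_{-,N}$ into the $w$-side is exactly the interchange of limit and integral that is forbidden.

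Concretely, at $a=1$ one has $r_{\eps,1}r_{\eps,2}=1$, with $r_{\eps,2}$ vanishing to second order at $w=1$. The term you propose to kill, $\frac{w^{\xi'+N}}{(w-1)^N}r_{\alpha,2}(w)^{tN-m'}r_{\beta,2}(w)^{(1/2-t)N}F_{\alpha,2}(w)F_{\beta,2}(w)$, therefore still has a pole of order $2m'$ at $w=1$. Your pole counting against $(w-1)^{-N}$ does not remove it. In fact, using $-F_{\alpha,2}+cF_{\alpha,2}F_{\beta,1}F_{\alpha,1}=-cF_{\alpha,2}F_{\beta,2}$, your naive integrand differs from the one in \eqref{eq:kernel-left} by $c\,r_{\alpha,2}^{tN-m'}r_{\beta,2}^{(1/2-t)N}F_{\alpha,2}F_{\beta,2}$, a term of exactly this type. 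Finally, once the $w$-integrand is a fixed explicit function, its eigenprojection expansion has four terms, so nothing in your scheme can produce an infinite geometric series.

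What is needed is the paper's iteration in the biased model, carried out before any limit.
\begin{enumerate}
\item Start from $\Phi_{\alpha,a}(w)^{-m'}\widetilde{\phi}_{-,N,a}(w)$. The $F_{\alpha,1,a}$ part has no singularity at $a^{-2}$, so it can be sent to $a\to1$.
\item Rewrite the $F_{\alpha,2,a}$ part in the $\widetilde{\phi}_{+,N,a}^{-1}$ form, as $r_{\alpha,2,a}^{tN-m'}F_{\alpha,2,a}\Phi_{\beta,a}^{(1-t)N}\widetilde{\phi}_{+,N,a}^{-1}$. Here the $F_{\beta,2,a}$ piece carries the full power $r_{\beta,2,a}^{(1-t)N}$ and the factor $\widetilde{\phi}_{+,N,a}^{-1}$, so it is analytic inside $\gamma_a$ and really does drop out.
\item Convert the $F_{\beta,1,a}$ piece back to the $\widetilde{\phi}_{-,N,a}$ form. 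It splits into a term ready for the limit and a copy of the $F_{\alpha,2,a}$ term from step 2, premultiplied by $F_{\alpha,2,a}F_{\beta,1,a}$.
\item Each round trip adds one power to $\sum_k(F_{\alpha,2}F_{\beta,1})^k$. The remainder $(F_{\alpha,2,a}F_{\beta,1,a})^n$ must be shown to vanish uniformly on $\gamma_a$. The nonzero eigenvalue of this product vanishes at $w=a^2$, so it suffices to shrink $\gamma_a$ to a small circle about $a^2$.
\item Only then may you let $a\to1$ and insert $\widetilde{\phi}_{-,N}(w)=\frac{w^N}{(1-w)^N}\Phi_\alpha(w)^{tN}\Phi_\beta(w)^{(1/2-t)N}$. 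This is where your $N/2$ split legitimately appears.
\end{enumerate}

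Your algebra is sound and matches the paper's summation lemma. Since $\operatorname{tr}(F_{\alpha,1}F_{\beta,1})=\tfrac12+g_{\alpha,\beta}$, the nonzero eigenvalue of $F_{\alpha,2}F_{\beta,1}$ is $\tfrac12-g_{\alpha,\beta}$, hence $\sum_{k\ge1}(F_{\alpha,2}F_{\beta,1})^k=c\,F_{\alpha,2}F_{\beta,1}$.

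The case $m'>tN$ needs the same loop, started from $r_{\beta,1,a}^{N-m'}F_{\beta,1,a}\widetilde{\phi}_{+,N,a}^{-1}$. The exponent $(2t-1/2)N-m'$ then comes from $r_{\beta,1}^{tN-m'}$ together with $F_{\beta,2}\Phi_\beta^{(1/2-t)N}=r_{\beta,1}^{-(1/2-t)N}F_{\beta,2}$.
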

The above kernel recovers the split two-periodic kernel when $t = 1/2$ and it recovers the two-periodic kernel when $\alpha=\beta$. The proof of this theorem is presented in Section \ref{section:kernel-proof} with some technical details deferred to Section \ref{section:technical-lemmas}. 
\begin{remark} \label{rmk:two-per-comparision}
    It is useful to relate the above expression to the correlation kernel of the two-periodic Aztec diamond. Let $\K_{TP,N}^{\eps}$ denote the kernel for the two-periodic Aztec diamond with weight parameter $\eps$. If $m,m' > tN$, the correlation for the $t$-split two-periodic model can be expressed as 
    \begin{multline}
        \left[\K_N(4m',2\xi'+j;4m,2\xi+i)\right]_{i,j=0,1} = \left[\mathbb{K}_{TP,N}^{\beta}(4m',2\xi'+j;4m,2\xi+i)\right]_{i,j=0,1} \\
        + \frac{1}{(2\pi\ii)^2}\oint_{\gamma_1}\dw\oint_{\gamma_{0,1}} \frac{\dz}{z(z-w)}\frac{w^{\xi'+N}(z-1)^N}{z^{\xi+N}(w-1)^N}r_{\beta,1}(w)^{(2t - 1/2)N - m'}c(w) \\
        \times F_{\beta,1}(w)F_{\alpha,1}(w)F_{\beta,2}(w)\Phi_{\beta}(z)^{m-N/2},
    \end{multline}
    so to analyze the asymptotics we only need to focus on the last integral. This is not the case for $m,m' < tN$, so the asymptotics prove to be a more laborsome task. 
\end{remark}

\subsection{Asymptotics} \label{section:asymptoticresults}

Let $(x,y) \in [0,1] \times [-1,0]$ denote the \textit{asymptotic coordinate} of $(4m,2\xi+i)$, where $m  = xN + \mu$, $\xi = yN + \sigma$ and $\mu, \sigma = o(N)$. Given an asymptotic coordinate, it is natural to ask which macroscopic region--smooth, rough, or frozen--the point is located in or whether it lies on some boundary between regions. This analysis constitutes the first level of understanding with respect to the limit shape of the model.

Duits and Kuijlaars neatly relate the macroscopic boundary of the two-periodic Aztec diamond to a saddle function defined on a Riemann surface in \cite{DK21}. Consider the two-periodic Aztec diamond with parameter $\alpha$. The related Riemann surface $\mathcal{R}$ consists of two copies of $\mathbb{C}\backslash \left((-\infty,-\alpha^{-2}] \cup [-\alpha^2,0]\right)$ glued together along the branch cuts $(-\infty,-\alpha^{-2}]$ and $[-\alpha^2,0]$ in the usual crosswise manner. This surface has genus 1 unless $\alpha = 1$, in which case it has genus 0. The corresponding saddle function is then 
\begin{equation}
    \Psi_\alpha(z;x,y) = (1+y)\log z - \log(z-1) + \left(\frac{1}{2}-x\right) \log \mathbf{r_\alpha}(z),
\end{equation}
where $\mathbf{r_\alpha}(z) = r_{\alpha,1}(z)$ on the first sheet of $\mathcal{R}$ and $\mathbf{r_\alpha}(z) = r_{\alpha,2}(z)$ on the second sheet of $\mathcal{R}$. Recall that the functions $r_{\alpha,i}(z)$ are defined back in \eqref{eq:r1} and \eqref{eq:r2}. The differential of $\Psi_\alpha$ is single-valued and meromorphic,
\begin{equation}
    \Psi'_\alpha(z) \, dz = \left(\frac{1+y}{z} - \frac{1}{z} + \left(\frac{1}{2} - x\right) \frac{\mathbf{r_\alpha}'(z)}{\mathbf{r_\alpha}(z)} \right) \, dz.
\end{equation}
Counting multiplicities, the above differential has four zeros. The zeros of this differential are referred to as the saddle points of $\Psi_\alpha$. 

Let $\mathcal{C}_1$ denote the union of the intervals $[-\alpha^{-2},-\alpha^2]$ on both sheets of $\mathcal{R}$, and let $\mathcal{C}_2$ denote the union of the intervals $[0,\infty)$ on both sheets of $\mathcal{R}$.\footnote{This is the same notation used in \cite{DK21}.} The union $\mathcal{C}_1 \cup \mathcal{C}_2$ is referred to as the \textit{real part} of $\mathcal{R}$. It was shown in \cite{DK21} that for any fixed $(x,y)$, there are always at least two distinct saddle points on $\mathcal{C}_1$. The location of the remaining saddle points completely determines the macroscopic region of the point $(x,y)$.
\begin{definition}[Macroscopic regions of the two-periodic Aztec diamond] \label{def:macro-regions}
    Fix $(x,y) \in [0,1] \times [-1,0]$. 
    \begin{enumerate}[label=(\roman*)]
        \item If there are two simple saddle points on $\mathcal{C}_2$, then $(x,y)$ is in the \textit{frozen region}. 
        \item If there are two simple saddles points not in the real part of $\mathcal{R}$, then $(x,y)$ is in the \textit{rough region}.
        \item If there are two simple saddles on $\mathcal{C}_1$, then $(x,y)$ is in the \textit{smooth region}. 
    \end{enumerate}
    The boundary of regions occurs when saddle points coalesce:
    \begin{enumerate}[label=(\roman*)]
        \setcounter{enumi}{3}
        \item If there is a double saddle point on $\mathcal{C}_2$, then $(x,y)$ is on the \textit{frozen-rough boundary} of the model. 
        \item If there is a double or triple saddle point on $\mathcal{C}_1$, then $(x,y)$ is  on the \textit{rough-smooth boundary} of the model. 
    \end{enumerate}
\end{definition}
A similar definition was given in \cite{S25} for the split two-periodic Aztec diamond. Since the correlation kernel for the split two-periodic model has a piecewise definition, it is natural to describe the macroscopic boundary via a piecewise saddle function:
\begin{equation} \label{eq:saddle-split}
    \psi_{\alpha,\beta,i}(z;x,y) = 
    \begin{cases}
        (1+y)\log z - \log(z-1) + \left(\frac{1}{2}-x\right) \log r_{\alpha,i}(z) & x \leq 1/2,\\
        (1+y)\log z - \log(z-1) + \left(\frac{1}{2}-x\right) \log r_{\beta,i}(z) & x > 1/2,
    \end{cases}
\end{equation}
where $\alpha$ and $\beta$ are the two-periodic parameters to the left and right of the interface, respectively. That is, the saddle function on either side of the interface coincides with the saddle function of the corresponding two-periodic model. Note that the saddle function above is described sheet by sheet, rather than on a single Riemann surface. This is because there is currently no single Riemann surface that is consistent across the interface. For fixed $(x,y)$, one can realize the same Riemann surface as the two-periodic case and use Definition \ref{def:macro-regions} to describe the regions of the model. For fixed $z$, the saddle function in \eqref{eq:saddle-split} is continuous, but not necessarily differentiable, as we vary $(x,y)$ across the interface. In particular, certain points along the interface may lie on the boundary between macroscopic regions.\\

Since the $t$-split correlation kernel is also piecewise, we describe the macroscopic boundary separately on either side of the interface. Below we state what is known and conjectured regarding the asymptotics of the model in each case.
\begin{proposition}[Macroscopic boundary to the right of the interface] \label{prop:right-side-asymptotics}
    The macroscopic boundary of the $t$-split two-periodic Aztec diamond to the right of the interface is governed by the saddle functions
    \begin{equation}
        \psi_{\beta,i}(z;x,y) = (1+y)\log z - \log(z-1) + \left(\frac{1}{2}-x\right) \log r_{\beta,i}(z), 
    \end{equation}
    for $i = 1,2$. In other words, the macroscopic boundary to the right of the interface is identical to that of the two-periodic model with parameter $\beta$.
\end{proposition}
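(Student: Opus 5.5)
Starting from the decomposition in Remark \ref{rmk:two-per-comparision}, for $m,m'>tN$ the kernel equals $\K_{TP,N}^{\beta}$ plus a single correction double integral. The macroscopic boundary of the two-periodic model with parameter $\beta$ is known from \cite{DK21}: its saddle point analysis with $\psi_{\beta,i}$ determines the frozen, rough and smooth regions exactly as in Definition \ref{def:macro-regions}. It therefore suffices to show that, at every asymptotic coordinate $(x,y)$ with $x>t$, the correction integral is exponentially small in $N$ compared to the leading behaviour of $\K_{TP,N}^\beta$, or at least $o(1)$ uniformly on compact subsets away from the boundary. Then the limiting local statistics, and hence the macroscopic regions, coincide with those of the $\beta$ two-periodic model.

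To handle the correction term I would write its integrand in exponential form. With $m'=xN+\mu'$, $\xi'=y'N+\sigma'$, the $w$-part is $\exp\!\big(N[(1+y')\log w-\log(w-1)+(2t-\tfrac12-x')\log r_{\beta,1}(w)]\big)$. The $z$-part is $\Phi_\beta(z)^{m-N/2}z^{-\xi-N}(z-1)^N$, which diagonalises into $r_{\beta,i}(z)$ pieces, giving $\exp(-N\psi_{\beta,i}(z;x,y))$. The key observation is the comparison with the $w$-exponent of the $\K_{TP}^\beta$ double integral, $\exp(N[(1+y')\log w-\log(w-1)+(\tfrac12-x')\log r_{\beta,1}(w)])$. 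The correction exponent differs by $(2t-1)\log r_{\beta,1}(w)$, that is, by the factor $r_{\beta,1}(w)^{-(1-2t)N}$ with $1-2t>0$. Since $|r_{\beta,1}(w)|>1$ away from the cuts (as $|r_{\beta,1}r_{\beta,2}|=1$, because $\det\Phi_\beta=1$, and $r_{\beta,1}$ is the larger eigenvalue), this factor is exponentially small wherever the $w$-contour sits.

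The plan is therefore the following. Deform $\gamma_1$ and $\gamma_{0,1}$ to the steepest descent contours used for $\K_{TP,N}^\beta$ in \cite{DK21}; this is legitimate because $c(w)F_{\beta,1}F_{\alpha,1}F_{\beta,2}$ introduces only the additional branch cuts of $g_{\alpha,\beta}$ on the negative axis, which lie inside or along the cuts already present, and no new poles near $1$. On these contours, bound the correction by $e^{N(\mathrm{Re}\,\Psi(w_0)-\mathrm{Re}\,\Psi(z_0))}$ times $\sup|r_{\beta,1}(w)|^{-(1-2t)N}$. Conclude that the correction is exponentially smaller than the main term, including at $w$ near the saddle points, and so it does not affect the coalescence conditions defining the boundary.

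The main obstacle is the contour deformation. The steepest descent $w$-contour for $\K_{TP}^\beta$ may pass through points where $|r_{\beta,1}(w)|=1$, namely the branch cuts on $\mathcal C_1$, where smooth region saddles live. There the decay factor degenerates, and in addition $c(w)$ and the $F$'s may have singularities at the endpoints $-\alpha^{\pm2}$. I would handle this by keeping the $w$-contour slightly off the cut and using continuity, or alternatively by treating the combined $w$-exponent $(\tfrac12-x'+2t-1)\log r_{\beta,1}$ as a two-periodic saddle function at a shifted point $x''=x'+1-2t$. That shifted point lies outside the diamond ($x''>1$ when $x'>2t$), and for such points $\mathrm{Re}$ of the saddle function is dominated, which again gives exponential smallness.
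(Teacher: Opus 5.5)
There is a genuine gap. Your key observation is the paper's own: compared with the $\beta$ two-periodic double integral, $I_{\beta,N}$ carries the extra factor $r_{\beta,1}(w)^{(2t-1)N}$, and $|r_{\beta,1}|>1$ off the cuts. The paper applies this directly on the original small contour $\gamma_1$, with no deformation.

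\textbf{Where your rigorous step fails.} You claim the factor is exponentially small ``wherever the $w$-contour sits'' after moving $\gamma_1$ to the steepest descent contours of \cite{DK21}. Those contours live on the $\beta$-surface $\mathcal R$. Across the cuts, $r_{\beta,1}$ continues to $r_{\beta,2}=1/r_{\beta,1}$. On the second sheet the factor therefore has modulus $|r_{\beta,1}|^{(1-2t)N}$, i.e.\ it is exponentially \emph{large}. So are the residues at $z=w$ along arcs there, which carry $\mathbf{r}_\beta(w)^{2tN-m-m'}$.

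This really happens to the right of the interface. In the genus-zero limit $\beta\to1$, $s=\sqrt z$ uniformizes $\mathcal R$, with first sheet $\{\mathrm{Re}\,s>0\}$ and $\mathbf r=\bigl((s+1)/(s-1)\bigr)^2$. The saddle equation becomes $ys^2+(2x-1)s-(1+y)=0$, and the rough-region saddles have $\mathrm{Re}\,s=(1-2x)/(2y)<0$ whenever $x<1/2$. So for $\beta$ near $1$ and rough points with $t<x<1/2$, the \cite{DK21} $w$-contour runs through the second sheet, and your bound is exponentially large.

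Your diagnosis of the obstacle is also misplaced. $\mathcal C_1=[-\beta^{-2},-\beta^2]$ is not a cut; on its first-sheet copy $r_{\beta,1}$ is real with $|r_{\beta,1}|>1$ (e.g.\ $r_{\beta,1}(-1)=-\beta^{-2}$). $|r_{\beta,1}|=1$ only on the cuts, so keeping the contour ``slightly off the cut'' does not address the real issue, which is the sheet.

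Finally, the extra cuts need not lie inside the old ones. If $\alpha>\beta$ (as in the paper's simulations), the integrand of $I_{\beta,N}$ jumps across $[-\alpha^2,-\beta^2]$ and $[-\beta^{-2},-\alpha^{-2}]$. These lie in $\mathcal C_1$, where smooth-region saddles sit.

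\textbf{Your second idea, and what remains.} The shifted-point idea is the productive one, and it is sharper than you state. Since $(w-1)^2r_{\beta,1}(w)\to4(\beta+\beta^{-1})^2\neq0$ as $w\to1$, the $w$-integrand of $I_{\beta,N}$ is $(w-1)^{2m'-4tN}$ times a function analytic inside a small $\gamma_1$. Hence $I_{\beta,N}\equiv0$ for $m'\ge 2tN$. This is exactly your case $x''=x'+1-2t\ge1$, and it needs no saddle point analysis.

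Everything thus reduces to the strip $tN<m'<2tN$. There $x''\in(1-t,1)$ is inside the diamond and neither of your fixes applies. For $t<1/4$ the whole strip has $x<1/2$, where (for $\beta$ near $1$) your bound breaks at rough points as shown above.

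What a proof needs in the strip is an estimate of a \emph{mixed} double integral with exponent $N\bigl(\Psi_\beta(w;x'',y')-\Psi_\beta(z;x,y)\bigr)$. That means contours adapted to these two different functions, not the \cite{DK21} contours for $\Psi_\beta(\cdot;x,y)$. They must satisfy $\max_{C_w}\mathrm{Re}\,\Psi_\beta(\cdot;x'',y')<\min_{C_z}\mathrm{Re}\,\Psi_\beta(\cdot;x,y)$, with any residue arc from $z=w$ kept on the first sheet away from the cuts.

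The paper's short proof does not supply this step either. It bounds $|I_{\beta,N}|$ by $r^{(2t-1)N}M$ times the \emph{modulus} of the two-periodic double integral. Pulling a supremum out of an integral only gives $\oint\oint|\cdot|$, which the boundedness of the two-periodic kernel does not control. Your instinct that a genuine contour argument is required is sound, but the proposal does not yet contain one.
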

The proof of this result is found in Section \ref{section:asymptotics}. As stated in Remark \ref{rmk:two-per-comparision}, the correlation kernel for two coordinates to the right of the interface can be written as the two-periodic correlation kernel plus an additional double contour integral, 
\begin{multline} \label{eq:right-side-extra}
    \left[I_{\beta,N}(4m',2\xi'+j;4m,2\xi+i)\right]_{i,j=0,1} \\
        = \frac{1}{(2\pi\ii)^2}\oint_{\gamma_1}\dw\oint_{\gamma_{0,1}} \frac{\dz}{z(z-w)}\frac{w^{\xi'+N}(z-1)^N}{z^{\xi+N}(w-1)^N}r_{\beta,1}(w)^{(2t - 1/2)N - m'}c(w) \\
        \times F_{\beta,1}(w)F_{\alpha,1}(w)F_{\beta,2}(w)\Phi_{\beta}(z)^{m-N/2}.
\end{multline}
To prove Proposition \ref{prop:right-side-asymptotics}, it suffices to show that $I_{\beta,N} \to 0$ as $N \to \infty$ for any points strictly to the right of the interface. This proof, in fact, reveals an even strong statement about the contour integral $I_{\beta,N}$:
\begin{corollary}
    For $m,m' > tN$, there exists $c > 0$ such that $I_{\beta,N} = O(e^{-cN})$. 
\end{corollary}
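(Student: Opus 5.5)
Since $I_{\beta,N}$ in \eqref{eq:right-side-extra} is an explicit double contour integral, the plan is to decouple the $w$- and $z$-dependence and bound each factor by steepest descent. Write $m = xN+\mu$, $m' = x'N+\mu'$, $\xi = yN+\sigma$, $\xi' = y'N+\sigma'$ with $x,x' > t$. The integrand then factors, up to matrix prefactors that are $N$-independent and bounded on compact contours away from the branch points, as
\[
\exp\bigl(N[\Psi_w(w) - \Psi_z(z)]\bigr).
\]
The two exponents are
\[
\Psi_w(w) = (1+y')\log w - \log(w-1) + (2t-\tfrac12 - x')\log r_{\beta,1}(w),
\]
\[
\Psi_z(z) = (1+y)\log z - \log(z-1) + (\tfrac12 - x)\log \Phi_\beta(z).
\]
The $z$-part is exactly the $z$-part of the two-periodic kernel $\K^{\beta}_{TP,N}$. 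Diagonalizing $\Phi_\beta(z)^{m-N/2} = \sum_k r_{\beta,k}(z)^{m-N/2}F_{\beta,k}(z)$ splits $I_{\beta,N}$ into two integrals. In each, the $z$-exponent is $-\psi_{\beta,k}(z;x,y)$, the two-periodic saddle function of Proposition \ref{prop:right-side-asymptotics}.

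The key observation concerns the $w$-exponent. Compare it with the analogous $w$-exponent in the two-periodic kernel, $(\tfrac12 - x')\log r_{\beta,1}(w)$. The extra term has coefficient $2t - \tfrac12 - x' = (\tfrac12 - x') - (1-2t)$. Relative to the two-periodic case, this adds $-(1-2t)N\log r_{\beta,1}(w)$, and since $t<1/2$ this is a strictly positive multiple of $N$.

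We then use that $|r_{\beta,1}(w)| > 1$ on suitable contours. From \eqref{eq:r1}, $r_{\beta,1}(w)r_{\beta,2}(w) = \det\Phi_\beta(w)=1$, and $|r_{\beta,1}| \ge 1$ off the real cuts, with strict inequality near $w=1$, where $r_{\beta,1}$ has a double pole. It follows that $|r_{\beta,1}(w)|^{-(1-2t)N}$ decays exponentially along any contour $\gamma_1$ on which $|r_{\beta,1}|>1$ uniformly.

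The steps are as follows. First, take the standard steepest descent contours for the two-periodic kernel at $(x,y),(x',y')$, from \cite{DK21} and the two-periodic analysis. On these, the two-periodic part of the exponent satisfies $\mathrm{Re}[\Psi^{TP}_w(w) - \Psi^{TP}_z(z)] \le 0$, with equality only at saddle points. Second, deform so that $\gamma_1$ stays in the region $\{|r_{\beta,1}|\ge 1+\delta\}$. Third, conclude that
\[
|I_{\beta,N}| \le C\, e^{N\max(\mathrm{Re}\,\Psi^{TP}_w - \mathrm{Re}\,\Psi^{TP}_z)}\,(1+\delta)^{-(1-2t)N} = O(e^{-cN}).
\]
The extra factor $c(w)F_{\beta,1}F_{\alpha,1}F_{\beta,2}$ is handled by choosing the contours away from the cuts of $g_{\alpha,\beta}$ and checking that $1+2g_{\alpha,\beta}\ne0$ there. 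As for the pole at $z=w$: the $z$-residue at $z=w$ gives a single integral of $w$ alone. There $F_{\beta,2}(w)\cdot F_{\beta,k}(w)$ from the $z$-side kills the $k=1$ term, and the remaining terms carry net exponent $N(\cdots)$ with the same decaying $r_{\beta,1}^{-(1-2t)N}$ factor. These are bounded similarly.

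The main obstacle is the second step. The two-periodic steepest descent contour for $w$ in the rough region passes through non-real saddle points and may approach the branch cuts $[-\beta^2,0]$, where $|r_{\beta,1}|=1$. So it is not obvious that $|r_{\beta,1}|>1$ uniformly along it. My first attempt is to avoid needing the two-periodic contours at all. Because $x'>t$ and $2t-\tfrac12-x'<\tfrac12 - x'$, I would directly show that $\mathrm{Re}\,\Psi_w$ on a small circle around $1$ is strictly less than $\mathrm{Re}\,\Psi_z$ on a large circle around $0,1$. This uses the pole of order $2$ of $r_{\beta,1}$ at $1$ and a monotonicity argument in the level lines. Failing that, I would split into the frozen, rough and smooth cases of Definition \ref{def:macro-regions} and verify in each that the two-periodic contours can be pushed off the cut while keeping the strict gain from the $(1-2t)$ term.
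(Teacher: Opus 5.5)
Your strategy is the paper's. Relative to the two-periodic integrand, the only new $N$-dependence is $r_{\beta,1}(w)^{-(1-2t)N}$, and $|r_{\beta,1}|>1$ on the $w$-contour. The paper's proof is the one-line version of this. It keeps the original $\gamma_1,\gamma_{0,1}$, sets $r=\min_{\gamma_1}|r_{\beta,1}|$ and $M=\max_{\gamma_1}\|c\,F_{\alpha,1}F_{\beta,2}\|$, and bounds $|I_{\beta,N}|$ by $r^{(2t-1)N}M$ times the modulus of the remaining two-periodic double integral, which it quotes as convergent.

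Your hesitation at step 2 is justified, because that inequality is exactly what needs proof. It pulls the supremum of part of the integrand (a matrix sitting between $F_{\beta,1}(w)$ and $\Phi_\beta(z)^{m-N/2}$) outside an integral whose $O(1)$ size comes from cancellation. This is legitimate only on contours where the two-periodic integrand is bounded in modulus by $e^{o(N)}$, i.e.\ descent contours, and one then still needs $|r_{\beta,1}|\ge 1+\delta$ on those contours. So the paper does not supply the step you are missing, and your proposal, which leaves that step open, has a genuine gap there.

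Your first fallback cannot close the gap. Near $w=1$ one has $\Psi_w(w)=(2x'-4t)\log(w-1)+O(1)$. More precisely, the $w$-integrand is $(w-1)^{2m'-4tN}$ times a function analytic at $1$, since the $z$-integral is analytic in $w$ inside $\gamma_{0,1}$ and $c(1)=1$. Hence for $m'\ge 2tN$ the $w$-integrand is analytic inside $\gamma_1$ and $I_{\beta,N}\equiv 0$, which is worth recording.

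In the only nontrivial range, $tN<m'<2tN$, you have $\mathrm{Re}\,\Psi_w\to+\infty$ on shrinking circles about $1$, while $\mathrm{Re}\,\Psi_z\approx y\log|z|\to-\infty$ on large circles. Both deformations push in the wrong direction. By the usual minimax, the best achievable gap $\max\mathrm{Re}\,\Psi_w-\min\mathrm{Re}\,\Psi_z$ is governed by saddle values, so a saddle-point analysis is unavoidable.

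For the second fallback, be clear about what is automatic and what is not:
\begin{itemize}
\item Automatic: since $r_{\beta,1}r_{\beta,2}=1$, $|r_{\beta,1}|=1$ forces $\operatorname{tr}\Phi_\beta\in[-2,2]$, which happens only on the cuts. So $|r_{\beta,1}|>1$ on all of $\mathbb{C}$ minus the cuts, and any compact contour avoiding them yields some $\delta>0$.
\item Not automatic: you must show, region by region and uniformly, that the two-periodic descent contours can be placed in $\mathbb{C}$ minus the union of the $\alpha$- and $\beta$-cuts, at positive distance from them, without crossing zeros of $1+2g_{\alpha,\beta}$.
\item Because $c(w)$ and $F_{\alpha,1}(w)$ carry the $\alpha$-square roots, you cannot use any portion of a two-periodic contour that passes onto the second sheet of $\mathcal{R}$.
\end{itemize}

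Two smaller points. In the $z=w$ residue the surviving factor is $r_{\beta,1}(w)^{2tN-m-m'}$, which decays because $m+m'>2tN$; it is not the factor $r_{\beta,1}^{-(1-2t)N}$ you wrote. And step 1 as written presupposes $(x,y)=(x',y')$. For macroscopically separated points the two-periodic part need not be $O(1)$, so the corollary should be read locally, as the paper implicitly does.
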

This is notably different than the results obtained for the split two-periodic model in \cite{S25}, where we saw that the second order asymptotic term in the split two-periodic model was, in certain regions of the model, different in order from the two-periodic model. The above corollary shows that the right side of the $t$-split model agree in both the first and second order with the two-periodic model. \\

The story to the left of the interface is more complicated. First, we restrict the kernel presented in \eqref{eq:kernel-left} to the case where $m < tN$. Making some analytic simplifications yields
\begin{lemma} \label{lem:simplified-left-kernel}
    For $m,\,m' < tN$, the correlation kernel of the $t$-split model can be written as
    \begin{multline} \label{eq:kernel-left-simplified}
        \left[\K_N(4m',2\xi'+j;4m,2\xi+i)\right]_{i,j=0,1} = \frac{\1_{m\leq m'}}{2\pi \ii} \oint_{\gamma_{0,1}}\frac{\dz}{z} z^{\xi'-\xi}\Phi_{\alpha}(z)^{m-m'} \\
        -\frac{1}{(2\pi\ii)^2}\oint_{\gamma_1}\dw\oint_{\gamma_{0,1}} \frac{\dz}{z(z-w)}\frac{w^{\xi'+N}(z-1)^N}{z^{\xi+N}(w-1)^N}r_{\alpha,2}(w)^{tN-m'}r_{\beta,2}(w)^{(1/2-t)N} \\
        \times c(w) F_{\alpha,2}(w) F_{\beta,2}(w) \Phi_{\beta}(z)^{(t-1/2)N} \Phi_{\alpha}(z)^{m-tN}.
    \end{multline}
\end{lemma}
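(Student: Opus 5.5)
Starting from \eqref{eq:kernel-left} with $m,m'<tN$, we have $\phi_{0\to m,N}(z)=\Phi_\alpha(z)^m$. Hence in both double integrals the $z$-dependent matrix factor is $\Phi_\beta(z)^{(t-1/2)N}\Phi_\alpha(z)^{m-tN}$, which already matches \eqref{eq:kernel-left-simplified}. The work is in simplifying the $w$-dependent parts and in trading the single integral with indicator $\1_{m>m'}$ for one with $\1_{m\le m'}$.

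First I would expand $\Phi_\beta(w)^{(1/2-t)N}=r_{\beta,1}(w)^{(1/2-t)N}F_{\beta,1}(w)+r_{\beta,2}(w)^{(1/2-t)N}F_{\beta,2}(w)$ and likewise $\Phi_\alpha(w)^{k}=r_{\alpha,1}^kF_{\alpha,1}+r_{\alpha,2}^kF_{\alpha,2}$, using that the $F_{\eps,i}$ are complementary spectral projections.

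Next I would rewrite the first double integral. Its integrand is $r_{\alpha,1}(w)^{tN-m'}F_{\alpha,1}(w)\Phi_\beta(w)^{(1/2-t)N}$. I would write $r_{\alpha,1}^{tN-m'}F_{\alpha,1}=\Phi_\alpha(w)^{tN-m'}-r_{\alpha,2}^{tN-m'}F_{\alpha,2}$. The piece with the entire function $\Phi_\alpha(w)^{tN-m'}\Phi_\beta(w)^{(1/2-t)N}$ has only poles at $w=1$ (and $w=z$) inside the contours. For it I would deform $\gamma_1$ outside $\gamma_{0,1}$, or equivalently swap the order of integration. The $w$-integral then picks up a residue at $w=z$, giving a single $z$-integral with integrand $z^{\xi'-\xi}\Phi_\alpha(z)^{m-m'}$. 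The residual $w$-integral with no singularity except at $1$ should vanish: after the $z$-integration it is a polynomial-type expression, as in the standard Berggren--Duits argument \cite{BD19}, and indeed $(z-1)^N z^{-\xi-N}$ times $\Phi(z)$ powers has no pole at infinity for these indices. Combining that residue with $-\1_{m>m'}\oint z^{\xi'-\xi}\Phi_\alpha^{m-m'}$ should give $\1_{m\le m'}\oint(\cdots)$, because the full-circle integral over all $m,m'$ equals the residue term. So the first key step is to check that $-\1_{m>m'}+1=\1_{m\le m'}$ with the correct sign and orientation conventions.

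What remains is a $w$-integrand of the form
$$-r_{\alpha,2}^{tN-m'}F_{\alpha,2}\,\Phi_\beta^{(1/2-t)N}+r_{\alpha,2}^{tN-m'}c\,F_{\alpha,2}F_{\beta,1}F_{\alpha,1}\,\Phi_\beta^{(1/2-t)N}.$$
Expanding $\Phi_\beta$ spectrally gives terms carrying $r_{\beta,1}^{(1/2-t)N}$ and $r_{\beta,2}^{(1/2-t)N}$. I would show that the $r_{\beta,1}$-terms cancel, or integrate to zero. The $r_{\beta,2}$-term should collapse to $-c\,F_{\alpha,2}F_{\beta,2}$ through projection identities of the form
$$F_{\alpha,2}F_{\beta,1}F_{\alpha,1}=\kappa(w)\,F_{\alpha,2}F_{\beta,1},\qquad F_{\alpha,2}F_{\beta,i}F_{\alpha,2}=\lambda_i(w)F_{\alpha,2},$$
with $\lambda_1+\lambda_2=1$. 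These hold for rank-one projections, and the scalar $\lambda_i$ should be expressible through $g_{\alpha,\beta}$, for instance $\lambda_1=\tfrac12-g$ or similar, so that $c=2/(1+2g)$ produces exact cancellation. I expect these identities to be exactly the technical lemmas deferred to Section \ref{section:technical-lemmas}. I would verify them by computing traces, e.g. $\mathrm{tr}(F_{\alpha,1}F_{\beta,1})=\tfrac12+g_{\alpha,\beta}$ from \eqref{eq:f1}, \eqref{eq:f2}.

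The main obstacle is the $r_{\beta,1}$-terms, which might not cancel algebraically. In that case I would instead argue that their $w$-integrand is analytic across the cut, so that deforming $\gamma_1$ shows they contribute zero; alternatively, combine them with $\Phi_\alpha^{tN-m'}$ pieces to form entire functions and reuse the residue argument above. I would also need care with the branch cuts of $c$ and the $F$'s relative to $\gamma_1$ when deforming.
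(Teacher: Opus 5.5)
Your plan is correct, and the first half takes a different route from the paper. The paper deforms $\gamma_1$ to a contour $\gamma_{\text{Br}}$ around the cuts of $r_{\alpha,1}$ and picks up the residue at $w=z$. It then passes through the cuts, so that $r_{\alpha,1},F_{\alpha,1}$ continue to $r_{\alpha,2},F_{\alpha,2}$ with a sign flip. Finally it deforms back to $\gamma_1$, picking up a second residue. The two single integrals add up via $r_{\alpha,1}^{m-m'}F_{\alpha,1}+r_{\alpha,2}^{m-m'}F_{\alpha,2}=\Phi_\alpha^{m-m'}$.

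Your splitting $r_{\alpha,1}^{k}F_{\alpha,1}=\Phi_\alpha^{k}-r_{\alpha,2}^{k}F_{\alpha,2}$ on $\gamma_1$ reaches the same intermediate prefactor $r_{\alpha,2}^{tN-m'}(-F_{\alpha,2}+cF_{\alpha,2}F_{\beta,1}F_{\alpha,1})\Phi_\beta^{(1/2-t)N}$ with no sheet-switching. The factor $\Phi_\alpha(w)^{tN-m'}\Phi_\beta(w)^{(1/2-t)N}$ is rational with its only pole at $w=1$. The full $w$-integrand is $O(|w|^{\xi'-1})=O(|w|^{-2})$ because $\xi'\le -1$, so there is no residue at $w=\infty$. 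Hence the $w$-integral over $\gamma_1$ is minus the residue at $w=z$, which gives $+\frac{1}{2\pi\ii}\oint_{\gamma_{0,1}}\frac{\dz}{z}z^{\xi'-\xi}\Phi_\alpha(z)^{m-m'}$, and $1-\1_{m>m'}=\1_{m\le m'}$.

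This route is more elementary and avoids the delicate deformation around a cut running to infinity. The paper's version instead displays the two-sheeted structure that motivates $\varphi_{\alpha,\beta}$. Do correct your justification, though: the leftover term vanishes because of decay in $w$, as above. Your stated reason, that the $z$-side has no pole at infinity, is wrong. The $z$-integrand behaves like $z^{-\xi-2}$ with $\xi\le-1$ and generally does have a residue at $z=\infty$.

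In the algebraic step you end up using the paper's identity. Your first proposed identity $F_{\alpha,2}F_{\beta,1}F_{\alpha,1}=\kappa F_{\alpha,2}F_{\beta,1}$ is false in general, since the two sides have the row spaces of $F_{\alpha,1}$ and $F_{\beta,1}$ respectively. You do not need it. Your trace computation $\mathrm{tr}(F_{\alpha,1}F_{\beta,1})=\tfrac12+g_{\alpha,\beta}$ is right, giving $\lambda_1=\tfrac12-g_{\alpha,\beta}$ and $\lambda_2=\tfrac12+g_{\alpha,\beta}=1/c$. Set $M=-F_{\alpha,2}+cF_{\alpha,2}F_{\beta,1}F_{\alpha,1}$ and substitute $F_{\alpha,1}=I-F_{\alpha,2}$. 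Then $MF_{\beta,1}=(c\lambda_2-1)F_{\alpha,2}F_{\beta,1}=0$ and $MF_{\beta,2}=-(1+c\lambda_1)F_{\alpha,2}F_{\beta,2}=-cF_{\alpha,2}F_{\beta,2}$.

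So the $r_{\beta,1}$-terms cancel identically, and the obstacle you anticipated does not arise. That is fortunate, because your analyticity fallback would not have worked: $r_{\beta,1}(w)$ has a double pole at $w=1$ inside $\gamma_1$. The paper does the same computation in the other order. It first shows $M=-cF_{\alpha,2}F_{\beta,2}$ from $cF_{\alpha,2}F_{\beta,2}F_{\alpha,2}=F_{\alpha,2}$, then uses $F_{\beta,2}\Phi_\beta(w)^{k}=r_{\beta,2}(w)^{k}F_{\beta,2}$.
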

The proof of the above lemma is postponed to Section \ref{section:technical-lemmas}. The form of this kernel motivates the following conjecture:
\begin{conj}[Macroscopic boundary to the left of the interface] \label{conj:left-asymptotics}
    The macroscopic boundary of the $t$-split two-periodic Aztec diamond to the left of the interface is governed by the saddle function
    \begin{equation}
        \varphi_{\alpha,\beta}(z;x,y) = (1+y) \log z - \log(z-1) + (t-x) \log r_{\alpha,2}(z) + (1/2 - t) \log r_{\beta,2}(z),
    \end{equation}  
    in the sense that the macroscopic boundary corresponds to the coalescing of critical points, in the same way as in the two-periodic model.
\end{conj}
The use of $r_{\alpha,2}(z)$ and $r_{\beta,2}(z)$ follows from the convention chosen for the square root and logarithm branch cuts; an alternative choice may alter the second indices. Proving the above conjecture involves understanding, to an extent, the location and movement of the critical points of $\varphi_{\alpha,\beta}$. From this information, one can then use the method of steepest descent to approximate the kernel for large $N$. The critical points of $\varphi_{\alpha,\beta}(z;x,y)$ can not be computed explicitly for general $(x,y)$ and is the main source of tediousness in proving the stated claim. For explicit parameters we are able to understand the critical points of $\varphi_{\alpha,\beta}$ numerically. Numerical results allow us to produce sample limit shapes such as the one presented in Figure \ref{fig:t-split-example-2}.

Based on the form of the function in Conjecture \ref{conj:left-asymptotics}, it is clear that the limit shape to the left of the interface now depends on the parameters $\alpha$, $\beta$, and $t$. For $0 < \alpha < 1$, numerical evidence suggest that the parameters $\beta$ and $t$ determine whether or not there is a smooth region to the left of the interface. While the location of the cusp depends on $\alpha$, we conjecture that the existence of the cusp only depends on $\beta$ and $t$. We state this precisely in the following conjecture, 
\begin{conj} \label{conj:left-smooth}
    Let $0 < \beta, \,\alpha < 1$. Then a smooth region exists to the left of the interface if and only if
    \[
    \frac{\beta^2}{1+\beta^2} < t.
    \]
\end{conj}
Given a two-periodic model with parameter $\beta = p/q$ such that $0<\beta<1$, the left most \textit{cusp} of the rough-smooth boundary occurs at the horizontal position $p^2/(p^2+q^2)\in (0,1/2)$. Therefore, the statement in the conjecture is simply saying that the cusp for the two-periodic model with parameter $\beta$ would occur before the interface line. Note that we do not have a similar condition on left side parameter $\alpha$. Figure \ref{fig:ex-4} gives an example where the left side cusp would happen after the interface in the two-periodic model; however, since $\beta$ is small, and thus the model presents with a large smooth region, we still see a smooth region to the left of the interface. 

To prove Conjecture \ref{conj:left-smooth}, we once again need a certain level understanding on how the location of the critical points of $\varphi_{\alpha,\beta}$ depend on $\alpha$, $\beta$, and $t$. Since the critical points cannot be computed exactly, we require an implicit proof of the fact. In this work, we do wish to present numerical evidence. Consider the the function $\varphi_{\alpha,\beta}$ for $\alpha = 1/2$ and $t = 1/4$. For any $\beta$, $x$, and $y$, we can compute the critical points of $\varphi_{\alpha,\beta}$ numerically using the Mathematica function \texttt{NSolve[$\cdot$]} and then use Definition \ref{def:macro-regions} to determine the macroscopic region of the model. This is done in Table \ref{tab:num-conj-2} for $x = 1/4$ and various values of $y$ and $\beta$. The $x$ parameter was chosen to be exactly $1/4$ to ensure we detect small smooth regions. The numerical analysis shows that the smooth region to the left of the interface disappears between $\beta = 0.5$ and $\beta = 0.6$. This aligns with the condition outlined in Conjecture \ref{conj:left-smooth}. Note that the numerical results do not suggest the elimination of the frozen region when $\beta \geq 0.5$, just that it lies above and below the lines $y = -0.1$ and $y = -0.9$, respectively. \\

\begin{figure}[h]
    \centering
    \begin{subfigure}[t]{0.3\textwidth}
        \includegraphics[width=\textwidth]{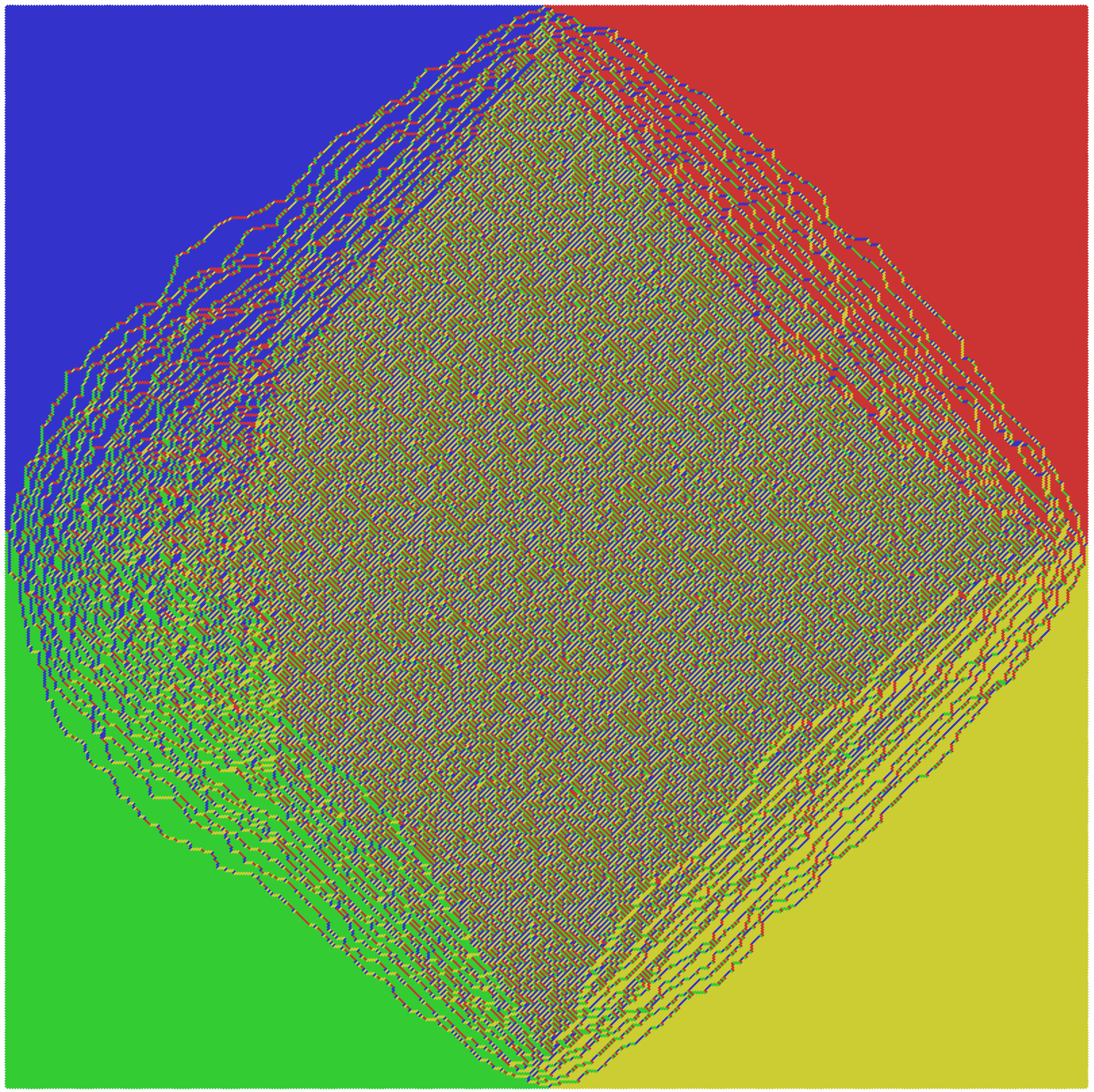}
        \caption{Simulated model with $t = 1/4$, $\alpha = 2/3$, and $\beta = 1/5$.}
        \label{fig:ex-4}
    \end{subfigure}
    \hspace{1cm}
    \begin{subfigure}[t]{0.3\textwidth}
        \includegraphics[width=\textwidth]{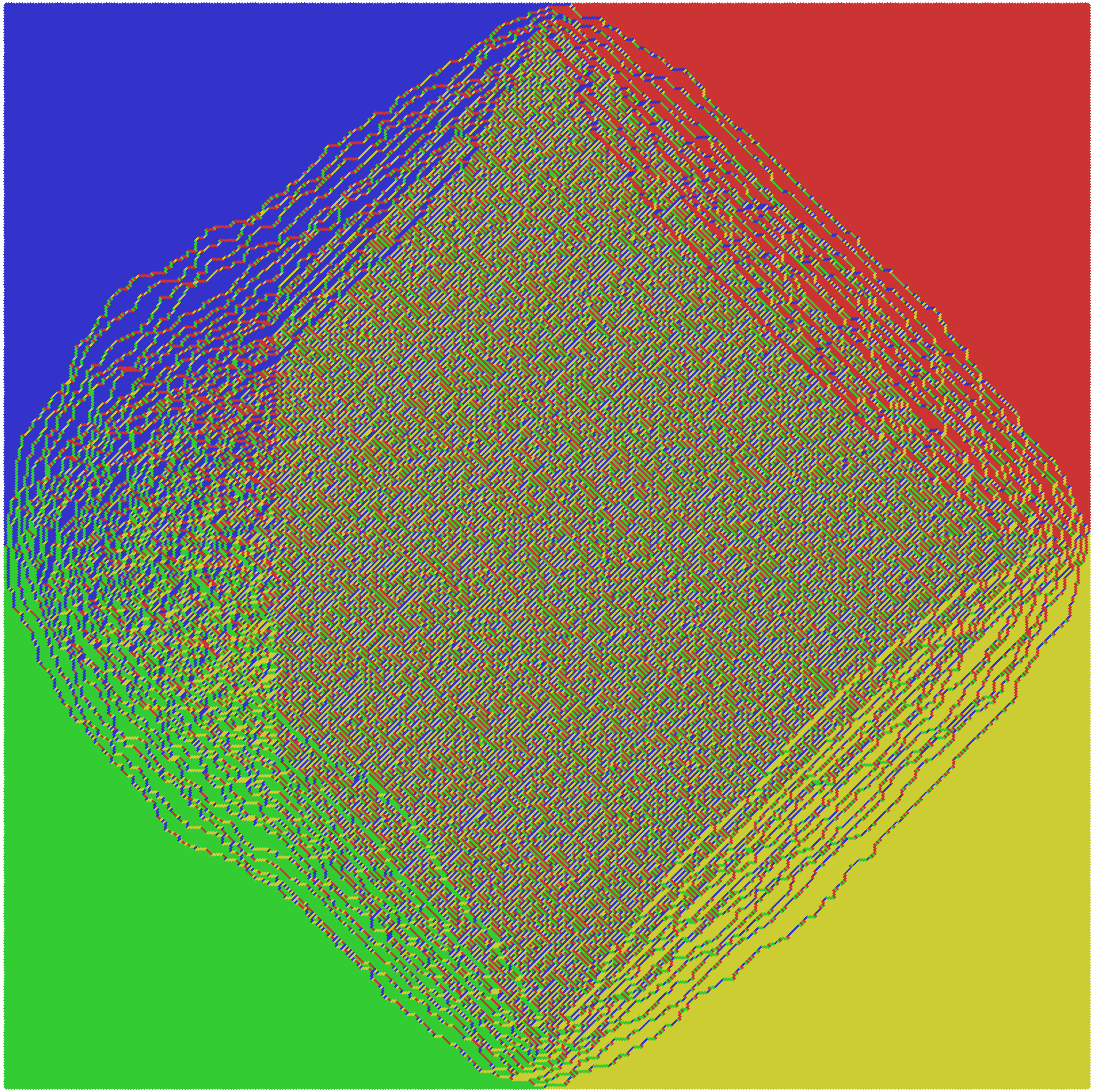}
        \caption{Simulated model with $t = 1/4$, $\alpha = 1$, and $\beta = 1/5$.}
        \label{fig:ex-5}
    \end{subfigure}
    \caption{Simulated models to help illustrate Conjecture \ref{conj:left-smooth}. In (a), since $\beta$ is sufficiently small and $0 < \alpha < 1$, the model still exhibits a smooth region to the left of the interface. This is no longer the case in (b) because $\alpha = 1$.}
    \label{fig:ex-4-5}
\end{figure}

\begin{table}[ht]
  \centering
  \begin{tabular}{c cccccc}
    \toprule
    & \multicolumn{6}{c}{$\beta$} \\
    \cmidrule(lr){2-7}
    $y$ & 0.1 & 0.2 & 0.3 & 0.4 & 0.5 & $\geq$ 0.6 \\
    \midrule
    -0.1 & F & F & F & F & R & R \\
    -0.2 & F & R & R & R & R & R \\
    -0.3 & R & R & R & R & R & R \\
    -0.4 & S & S & R & R & R & R \\
    -0.5 & S & S & S & S & S & R \\
    -0.6 & S & S & R & R & R & R \\
    -0.7 & R & R & R & R & R & R \\
    -0.8 & F & R & R & R & R & R \\
    -0.9 & F & F & F & F & R & R \\
    \bottomrule
  \end{tabular}
  \caption{Table denoting the macroscopic region of the model for various points along the line $x = 1/4$. The model has fixed parameters $t=1/4$ and $\alpha =1/2$, while $\beta$ and $y$ are varied.}
  \label{tab:num-conj-2}
\end{table}

The asymptotics of the model simplify in the case $\beta = 0$ enough to prove the entire limit shape. In the two-periodic model, sending the two-periodic parameter to zero eliminates the rough region of the model creating a frozen-smooth boundary \cite{JM23}. By Proposition \ref{prop:right-side-asymptotics}, we know that the right side of the $t$-split model should exhibit this same behavior for $\beta = 0$. Thus, we only need to state what happens to the left of the interface when $\beta = 0$. To do so we define the functions
\begin{equation} \label{eq:f-tilde}
    \widetilde{F}_2(w) = \begin{pmatrix}
        0 & 0 \\
        \frac{1}{2}(w+1) & 1
    \end{pmatrix}
\end{equation}
and 
\begin{equation} \label{eq:c-tilde}
    \widetilde{c}(w) = \frac{2}{1+2\widetilde{g}_\alpha(w)},
\end{equation}
where
\begin{equation} 
    \widetilde{g}_{\alpha}(w) = \frac{2w+\alpha^2(w^2+1)}{4\sqrt{w(w+\alpha^2)(1+\alpha^2w)}}.
\end{equation}
When $\beta = 0$ we can restate the kernel to the left of the interface as:
\begin{proposition} \label{prop:b=0-case}
    For $m,m' < tN$ and $\beta = 0$, the $t$-split model has correlation kernel
    \begin{multline} \label{eq:simplified-kernel-left-b0}
        \left[\K_N(4m',2\xi'+j;4m,2\xi+i)\right]_{i,j=0,1} = \frac{\1_{m\leq m'}}{2\pi \ii} \oint_{\gamma_{0,1}}\frac{\dz}{z} z^{\xi'-\xi}\Phi_{\alpha}(z)^{m-m'} \\
        -\frac{1}{(2\pi\ii)^2}\oint_{\gamma_1}\dw\oint_{\gamma_{0,1}} \frac{\dz}{z(z-w)}\frac{w^{\xi'+N/2+tN}(z-1)^{2tN}}{z^{\xi+N/2+tN}(w-1)^{2tN}}r_{\alpha,2}(w)^{tN-m'}\\
        \times \widetilde{c}(w) F_{\alpha,2}(w) \widetilde{F}_{2}(w) \widetilde{F}_{2}(z) \Phi_{\alpha}(z)^{m-tN}.
    \end{multline}
\end{proposition}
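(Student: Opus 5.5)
The plan is to obtain Proposition \ref{prop:b=0-case} as the $\beta \to 0$ limit of Lemma \ref{lem:simplified-left-kernel}, with $N$, $t$, $m,m'$, $\xi,\xi'$ held fixed. The first step is to justify that this limit is legitimate. For fixed $N$ the Aztec diamond is a finite graph. All correlations are ratios of polynomials in the edge weights, and at $\beta = 0$ the partition function stays positive, because tilings avoiding the $\beta^{\pm 2}$ edges still exist once we clear the $\beta^{-2}$ weights by a gauge transformation. Hence the $\beta = 0$ kernel is the pointwise limit of the $\beta>0$ kernels. The first term of \eqref{eq:kernel-left-simplified} does not involve $\beta$ and passes through unchanged. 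All the work is therefore in the double integral.

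Write $k = (1/2-t)N$. From \eqref{eq:r1}--\eqref{eq:r2} and $\det\Phi_\beta = 1$ we have $r_{\beta,1}r_{\beta,2} = 1$. Expanding the square root for small $\beta$ (away from $z \in (-\infty,0]$) shows that the leading $\beta^{-2}$ and $O(1)$ terms of $r_{\beta,2}$ cancel. The outcome is
\[
r_{\beta,2}(w) = \beta^{2}\kappa(w)\bigl(1+O(\beta^2)\bigr),
\]
and I will compute $\kappa$ explicitly; I expect $\kappa(w) = (w-1)^2/w$ up to a constant. I then decompose
\[
\Phi_\beta(z)^{-k} = r_{\beta,1}(z)^{-k}F_{\beta,1}(z) + r_{\beta,2}(z)^{-k}F_{\beta,2}(z).
\]
Multiplying by the prefactor $r_{\beta,2}(w)^{k}$ gives two contributions:
\begin{itemize}
\item the $F_{\beta,1}$ piece is $O(\beta^{4k})$ and vanishes in the limit;
\item the $F_{\beta,2}$ piece tends to $\kappa(w)^{k}\kappa(z)^{-k}F_{\beta,2}(z)$.
\end{itemize}
The factor $\kappa(w)^k\kappa(z)^{-k}$ exactly converts $w^{\xi'+N}(z-1)^N/(z^{\xi+N}(w-1)^N)$ into the exponents $\xi'+N/2+tN$ and $2tN$ appearing in \eqref{eq:simplified-kernel-left-b0}. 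It remains to compute the limits of the matrix factors and of $c$:
\begin{itemize}
\item $F_{\beta,2}(w)\to\widetilde F_2(w)$ and $F_{\beta,2}(z)\to\widetilde F_2(z)$, using $\sqrt{z(z+\beta^2)(1+\beta^2 z)}\to z$ and the explicit entries in \eqref{eq:f2};
\item $g_{\alpha,\beta}\to\widetilde g_\alpha$, hence $c\to\widetilde c$, directly from \eqref{eq:g}.
\end{itemize}
I will double-check that the products $F_{\alpha,2}(w)F_{\beta,2}(w)$ and $F_{\beta,2}(z)$ assemble in the stated order, and that the constants in $\kappa$ and in $\widetilde F_2$ are consistent, for instance through the $(2,1)$ entry $\tfrac12(w+1)$.

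The main obstacle is interchanging the limit with the contour integrals. The branch cut $[-\beta^2,0]$ collapses onto $0$, which lies inside $\gamma_{0,1}$, so I will not use the spectral decomposition in $z$ directly on the contour. Instead I will first rewrite $\beta^{2k}\Phi_\beta(z)^{-k}$, which is a rational matrix function of $z$ with poles only at $0$ and $1$. Its entries are polynomials in $\beta^2$, so its convergence is uniform on $\gamma_{0,1}$, and the limit can be identified with $\kappa(z)^{-k}\widetilde F_2(z)$ by analytic continuation from $z>0$. For the $w$-integrand, $\gamma_1$ stays a fixed distance from $(-\infty,0]$, so the expansions above hold uniformly on $\gamma_1$ for small $\beta$. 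Dominated convergence then finishes the argument.
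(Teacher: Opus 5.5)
Your argument is the same as the paper's. You pass to $\beta\to0$ in Lemma~\ref{lem:simplified-left-kernel}, split $\Phi_\beta(z)^{(t-1/2)N}$ spectrally, and discard the $F_{\beta,1}(z)$ piece. You then use $r_{\beta,2}(w)r_{\beta,1}(z)\to \frac{z(w-1)^2}{w(z-1)^2}$ (your $\kappa(w)=\frac{(w-1)^2}{4w}$ is correct), together with the limits of $F_{\beta,2}$ and $c$.

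The paper justifies exchanging the limit with the integrals only by noting that the contours stay a fixed distance from all cuts. That already covers $\gamma_{0,1}$: the collapsing cut $[-\beta^2,0]$ lies in its interior, and $\gamma_{0,1}$ meets $(-\infty,0)$ in a fixed compact set that eventually sits inside $(-\beta^{-2},-\beta^{2})$. So your detour through the rational matrix $\beta^{2k}\Phi_\beta(z)^{-k}$ is optional, though it is cleaner because it avoids square roots entirely.

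That detour does, however, expose an indexing problem. The paper's proof has the same problem, and your promised check of the $(2,1)$ entry will run into it. The spectral splitting needs $F_{\beta,2}$ to be the projection paired with $r_{\beta,2}$; so does the derivation of Theorem~\ref{theorem:kernel}, and hence of Lemma~\ref{lem:simplified-left-kernel}. But with the branch fixed after \eqref{eq:r2}, the explicit matrix \eqref{eq:f2} is the projection for $r_{\beta,1}$. For example, with $\eps=1$ and $z=4$ one has $\Phi_1(4)=\frac19\begin{pmatrix}41&20\\80&41\end{pmatrix}$ and $r_{1,1}(4)=9$, and \eqref{eq:f2} fixes the eigenvector $(1,2)^{T}$ belonging to the eigenvalue $9$. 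Independently of any branch choice, your own computation gives
\[
\beta^{2}\Phi_\beta(z)^{-1}\longrightarrow \frac{4z}{(z-1)^2}\begin{pmatrix}1&0\\-\tfrac12(z+1)&0\end{pmatrix}=\kappa(z)^{-1}\bigl(I-\widetilde F_2(z)\bigr).
\]
So the $r_{\beta,2}$-projection tends to $I-\widetilde F_2$, whereas \eqref{eq:f-tilde} is the limit of the projection for the exploding eigenvalue $r_{\beta,1}$. Taken literally, two of your steps therefore fail:
\begin{itemize}
\item the identification of the limit with $\kappa(z)^{-k}\widetilde F_2(z)$;
\item reading $\lim F_{\beta,2}(w)$ off the entries of \eqref{eq:f2}.
\end{itemize}

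What your argument (and the paper's) actually proves is the displayed formula under a consistent convention. That is, $F_{\eps,j}$ must be understood as the spectral projection for $r_{\eps,j}$, which amounts to interchanging \eqref{eq:f1} and \eqref{eq:f2}. In addition, $\widetilde F_2$ must be replaced by $I-\widetilde F_2=\begin{pmatrix}1&0\\-\frac12(w+1)&0\end{pmatrix}$, at both $w$ and $z$. The factors $c$ and $\widetilde c$ are unaffected: they depend only on $\operatorname{tr}(F_{\alpha,2}F_{\beta,2})$, which is unchanged when both labels are swapped. You should fix this convention explicitly rather than quote the entries of \eqref{eq:f2}.
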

This proposition is proven in Section \ref{section:asymptotics}. Applying the coordinate change $\tilde{\xi} = \xi + N/2 - tN$ and $\tilde{\xi}' = \xi' + N/2 - tN$ to the kernel in \eqref{eq:simplified-kernel-left-b0}, the resulting saddle function is exactly the saddle function of the two-periodic model, but scaled by $tN$ rather than $N$. As the simulations in Figure \ref{fig:b=0-example} suggest, when $\beta=0$, the limit shape to the left of the interface coincides with the limit shape of a rescaled two-periodic model, restricted to the left half. The only difference between the kernels is the inclusion of the functions $\widetilde{c}(w)$, $\widetilde{F}_2(w)$, and $\widetilde{F}_2(z)$, which have no impact on the steepest descent analysis. \\

\section{Derivation of the Correlation Kernel} \label{section:kernel-proof}

To derive the correlation kernel, we wish to use Theorem 3.1 from \cite{BD19}; however it cannot be immediately applied to this case because $\Phi_\eps(z)$ has a singularity on the unit circle. In Section 5.2 of \cite{BD19} the authors present a method for overcoming this issue by introducing a bias into the weights. We write the biased matrices as 
\begin{equation} \label{eq:phi-eps-a}
    \Phi_{\eps,a}(z) = \frac{1}{(1-a^2z^{-1})^2}
    \begin{pmatrix}1 & \eps^2 z^{-1}a^{-1} \\ \eps^{-2}a^{-1} & 1\end{pmatrix}
    \begin{pmatrix}1 & \eps^2 z^{-1}a \\ \eps^{-2}a & 1\end{pmatrix}
    \begin{pmatrix}1 & z^{-1}a^{-1} \\ a^{-1} & 1 \end{pmatrix}
    \begin{pmatrix}1 & z^{-1}a \\ a & 1 \end{pmatrix}
\end{equation}
where $0< a < 1$. Sending $a \to 1$ gives $\Phi_{\eps,a}(z) \to \Phi_\eps(z)$. Now we can apply Theorem 3.1 from \cite{BD19} to the biased $t$-split two-periodic model. We begin by explicitly defining the Wiener-Hopf factorization of the matrix 
\begin{equation}
    \phi_{N,a}(z) = \Phi_{\alpha,a}(z)^{tN}\Phi_{\beta,a}(z)^{(1-t)N}.
\end{equation}
We say that 
\begin{equation} \label{eq:WH-decomp}
    \phi_{N,a}(z) = \widetilde{\phi}_{-,N,a}(z) \widetilde{\phi}_{+,N,a}(z)
\end{equation}
is a Wiener-Hopf type factorization provided:
\begin{enumerate}
    \item $\tilde{\phi}_{+,N,a}^{\pm 1}$ is analytic for $|z| < 1$ and continuous for $|z| \leq 1$,
    \item $\tilde{\phi}_{-,N,a}^{\pm 1}$ is analytic for $|z|>1$ and continuous for $|z| \geq 1$,
    \item and $\widetilde{\phi}_{-,N,a} \to I_2z^{-N}$ as $z \to \infty$.
\end{enumerate}
The explicit form of the matrices $\widetilde{\phi}_{\pm,N,a}(z)$ is complicated for general $a$. However, the factorization is continuous in $a$ and the matrices are simple to write when $a =1$. See \cite[Section 4]{BD19} for more details on computing the factorization in the $2 \times 2$ matrix case. From \cite[Theorem 3.1]{BD19} we obtain the kernel, 
\begin{multline} \label{eq:thm-3.1}
    \Big[\mathbb{K}_{a,N}(4m',2\xi'+j;4m,2\xi+i)\Big]_{i,j=0}^1 = -\frac{\mathbb{I}_{m>m'}}{2 \pi \ii} \oint_{\gamma_{0,1}} \frac{\dz}{z} z^{\xi'-\xi} \phi_{m \to m', N, a}(z)\\
    + \frac{1}{(2\pi\ii)^2} \oint_{\gamma_a} \dw \oint_{\gamma_{0,1,a}} \frac{\dz}{z(z-w)} \frac{w^{\xi'}}{z^{\xi}} \phi_{m'\to N,N,a}(w) \widetilde{\phi}_{+,N,a}(w)^{-1} \widetilde{\phi}_{-,N,a}(z)^{-1}\phi_{0\to m,N,a}(z)
\end{multline}
where $\gamma_{a}$ is a simple, positively oriented contour that contains the points $a^2$ and $1$ and does not contain the points $0$ or $a^{-2}$ and $\gamma_{0,1,a}$ is a simple, positively oriented contour containing the points $0$, $1$, $a^2$, and $a^{-2}$ and surrounds the contour $\gamma_a$. The matrices $\phi_{m'\to m,N,a}$ are defined as in \eqref{eq:phi-m'-to-m}, with each constituent matrix replaced by its biased counterpart. 

We would like to take the limit $a \to 1$, however the $w$ integral contains singularities inside and outside of the curve $\gamma_a$ that converge to the same point. Our hope is to massage the integrand so that all the singularities lie inside of the contour, thus allowing us to take the required limit. We present a proof of this in cases: $0 < m' < tN$ and $tN < m' < N$. In both cases the proof involves the infinite repetition of matrix identities, eventually resulting in elimination of the poles.
\begin{remark}
    This proof is different than the proof presented in \cite{S25} and presents an alternative, simpler, method to proving the correlation kernel for that model. 
\end{remark}

\subsection{Case 1: Left of the Interface}
We will first present the proof for $0 < m' \leq tN$. Under this restriction we write, 
\begin{equation}
    \phi_{m'\to N,N, a}(w)\widetilde{\phi}_{+,N,a}(w)^{-1} = \Phi_{\alpha,a}(w)^{tN - m'}\Phi_{\beta,a}(w)^{(1-t)N}\widetilde{\phi}_{+,N,a}(w)^{-1}.
\end{equation}
Now apply \eqref{eq:WH-decomp} to get, 
\begin{equation}
    \Phi_{\alpha,a}(w)^{tN - m'}\Phi_{\beta,a}(w)^{(1-t)N}\widetilde{\phi}_{+,N,a}(w)^{-1} = \Phi_{\alpha,a}(w)^{-m'}\widetilde{\phi}_{-,N,a}(w).
\end{equation}
Insert the eigen-decomposition of $\Phi_{\alpha,a}$ into the above expression yields, 
\begin{equation}
    \Phi_{\alpha,a}(w)^{-m'}\widetilde{\phi}_{-,N,a}(w) = r_{\alpha,1,a}(w)^{-m'}F_{\alpha,1,a}(w)\widetilde{\phi}_{-,N,a}(w) + r_{\alpha,2,a}(w)^{-m'}F_{\alpha,2,a}(w)\widetilde{\phi}_{-,N,a}(w).
\end{equation}
The explicit definitions of $r_{\alpha,j,a}$ and $F_{\alpha,j,a}$ for $j=1,2$ are given in Appendix \ref{appendix:biased-info}. The functions $F_{\alpha,j,a}$ are analytic away from the negative real axis. The function $r_{\alpha,1,a}$ and $r_{\alpha,1,a}$ are both analytic and non-zero away from the negative real axis, except $r_{\alpha,1,a}$ has a pole of order 2 at $a^2$ and $r_{\alpha,2,a}$ has a zero of order 2 at $a^{-2}$.

Substituting the term 
\begin{equation}
    r_{\alpha,1,a}(w)^{-m'}F_{\alpha,1,a}(w)\widetilde{\phi}_{-,N,a}(w)
\end{equation}
back into \eqref{eq:thm-3.1} gives no poles at $a^{-2}$, and so the limit $a \to 1$ may be taken. The contribution in \eqref{eq:thm-3.1} from the term
\begin{equation} \label{eq:repeat-from-here}
    r_{\alpha,2,a}(w)^{-m'}F_{\alpha,2,a}(w)\widetilde{\phi}_{-,N,a}(w)
\end{equation}
still has poles both inside and outside of the contour. Applying \eqref{eq:WH-decomp} once more, we may write this as, 
\begin{equation}
    r_{\alpha,2,a}(w)^{-m'}F_{\alpha,2,a}(w)\widetilde{\phi}_{-,N,a}(w) = r_{\alpha,2,a}(w)^{-m'}F_{\alpha,2,a}(w)\Phi_{\alpha,a}(w)^{tN}\Phi_{\beta,a}(w)^{(1-t)N}\widetilde{\phi}_{+,N,a}(w)^{-1}.
\end{equation}
Recognize that, by orthogonality,
\begin{equation}
    F_{\alpha,2,a}(w)\Phi_{\alpha,a}(w)^{tN} = r_{\alpha,2,a}(w)^{tN}F_{\alpha,2,a}(w).
\end{equation}
Thus we obtain,
\begin{equation}
    r_{\alpha,2,a}(w)^{-m'}F_{\alpha,2,a}(w)\widetilde{\phi}_{-,N,a}(w) = r_{\alpha,2,a}(w)^{tN-m'}F_{\alpha,2,a}(w)\Phi_{\beta,a}(w)^{(1-t)N}\widetilde{\phi}_{+,N,a}(w)^{-1}.
\end{equation}
Next expand the above using the eigen-decomposition of $\Phi_{\beta,a}$: 
\begin{multline}
    r_{\alpha,2,a}(w)^{tN-m'}F_{\alpha,2,a}(w)\Phi_{\beta,a}(w)^{(1-t)N}\widetilde{\phi}_{+,N,a}(w)^{-1} \\
    = r_{\alpha,2,a}(w)^{tN-m'}r_{\beta,1,a}(w)^{(1-t)N}F_{\alpha,2,a}(w)F_{\beta,1,a}(w)\widetilde{\phi}_{+,N,a}(w)^{-1} \\
    + r_{\alpha,2,a}(w)^{tN-m'}r_{\beta,2,a}(w)^{(1-t)N}F_{\alpha,2,a}(w)F_{\beta,2,a}(w)\widetilde{\phi}_{+,N,a}(w)^{-1}.
\end{multline}
Substituting the second term on the right into \eqref{eq:thm-3.1} gives no poles inside $\gamma_a$, so the integral vanishes by the residue theorem. Thus we are left with the term 
\begin{equation}
    r_{\alpha,2,a}(w)^{tN-m'}r_{\beta,1,a}(w)^{(1-t)N}F_{\alpha,2,a}(w)F_{\beta,1,a}(w)\widetilde{\phi}_{+,N,a}(w)^{-1},
\end{equation}
which, when substituted back into \eqref{eq:thm-3.1}, still has poles both inside and outside of the contour $\gamma_a$. Applying \eqref{eq:WH-decomp} once more, 
\begin{multline}
    r_{\alpha,2,a}(w)^{tN-m'}r_{\beta,1,a}(w)^{(1-t)N}F_{\alpha,2,a}(w)F_{\beta,1,a}(w)\widetilde{\phi}_{+,N,a}(w)^{-1} \\
    = r_{\alpha,2,a}(w)^{tN-m'}r_{\beta,1,a}(w)^{(1-t)N}F_{\alpha,2,a}(w)F_{\beta,1,a}(w)\Phi_{\beta,a}(w)^{-(1-t)N}\Phi_{\alpha,a}(w)^{-tN}\widetilde{\phi}_{-,N,a}(w).
\end{multline}
Simplifying using orthogonality gives, 
\begin{multline}
    r_{\alpha,2,a}(w)^{tN-m'}r_{\beta,1,a}(w)^{(1-t)N}F_{\alpha,2,a}(w)F_{\beta,1,a}(w)\Phi_{\beta,a}(w)^{-(1-t)N}\Phi_{\alpha,a}(w)^{-tN}\widetilde{\phi}_{-,N,a} \\
    = r_{\alpha,2,a}(w)^{tN-m'}F_{\alpha,2,a}(w)F_{\beta,1,a}(w)\Phi_{\alpha,a}(w)^{-tN}\widetilde{\phi}_{-,N,a}(w).
\end{multline}
Expanding $\Phi_{\alpha,a}$ via its eigen-decomposition,
\begin{multline}
    r_{\alpha,2,a}(w)^{tN-m'}F_{\alpha,2,a}(w)F_{\beta,1,a}(w)\Phi_{\alpha,a}(w)^{-tN}\widetilde{\phi}_{-,N,a} \\
    = r_{\alpha,2,a}(w)^{-m'}F_{\alpha,2,a}(w)F_{\beta,1,a}(w)F_{\alpha,2,a}(w)\widetilde{\phi}_{-,N,a} \\
    + r_{\alpha,2,a}(w)^{tN-m'}r_{\alpha,1,a}(w)^{-tN}F_{\alpha,2,a}(w)F_{\beta,1,a}(w)F_{\alpha,1,a}(w)\widetilde{\phi}_{-,N,a}(w).
\end{multline}
Substituting the second term on the right into \eqref{eq:thm-3.1} yields no pole at $a^{-2}$, and the limit as $a \to 1$ may be taken. The first term on the right is not amenable to this limit, but it has the same structure as \eqref{eq:repeat-from-here}. The only difference is the additional $F$-matrices, which have no poles of concern around or inside the contour $\gamma_a$. We may therefore return to that step and iterate. Repeating the process $n$ times yields, 
\begin{multline}
    \phi_{m'\to N,N, a}(w)\widetilde{\phi}_{+,N,a}(w)^{-1} \equiv r_{\alpha,1,a}(w)^{-m'}F_{\alpha,1,a}(w)\widetilde{\phi}_{-,N,a}(w) \\
    + r_{\alpha,2,a}(w)^{tN-m'}r_{\alpha,1,a}(w)^{-tN} \left( \sum_{k=1}^n \left(F_{\alpha,2,a}(w)F_{\beta,1,a}(w)\right)^k \right) F_{\alpha,1,a}(w) \widetilde{\phi}_{-,N,a}(w) \\
    + r_{\alpha,2,a}(w)^{-m'}\left(F_{\alpha,2,a}(w)F_{\beta,1,a}(w)\right)^n F_{\alpha,2,a}(w)\widetilde{\phi}_{-,N,a}(w)
\end{multline}
where we use the equivalence symbol to denote that the value of the integral in equation \eqref{eq:thm-3.1} remains unchanged under the replacement. When the first two terms on the right hand side above are plugged into the integral we are able to immediately take the limit $a \to 1$. The third term is handled by the following lemma.
\begin{lemma} \label{lem:tech-lemma-1}
    As $n \to \infty$, $\left(F_{\alpha,2,a}(w)F_{\beta,1,a}(w)\right)^n \to 0$ uniformly for $w \in \gamma_a$. 
\end{lemma}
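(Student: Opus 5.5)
The plan is to reduce the matrix power to a scalar power. Both $F_{\alpha,2,a}(w)$ and $F_{\beta,1,a}(w)$ are rank-one projections. Write $F_{\alpha,2,a} = u_\alpha v_\alpha^T$ with $v_\alpha^T u_\alpha = 1$, and $F_{\beta,1,a} = u_\beta v_\beta^T$ with $v_\beta^T u_\beta = 1$. Here $u$ is a right eigenvector of the corresponding $\Phi$ and $v^T$ is the dual left eigenvector, both read off from $E_{\eps,a}(w)$. Then
\begin{equation}
\left(F_{\alpha,2,a}F_{\beta,1,a}\right)^n = \lambda(w)^{n-1} F_{\alpha,2,a}F_{\beta,1,a}, \qquad \lambda(w) = \operatorname{tr}\left(F_{\alpha,2,a}(w)F_{\beta,1,a}(w)\right) = (v_\alpha^T u_\beta)(v_\beta^T u_\alpha).
\end{equation}
This identity holds because the product of two rank-one matrices is rank one, so its powers are governed by its trace. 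The lemma therefore reduces to two claims. First, $F_{\alpha,2,a}F_{\beta,1,a}$ is bounded on $\gamma_a$. Second, $\sup_{w\in\gamma_a}|\lambda(w)|<1$. Given both, $\left(F_{\alpha,2,a}F_{\beta,1,a}\right)^n = O(q^{n})$ for some $q<1$, uniformly on $\gamma_a$.

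Boundedness is routine. The contour $\gamma_a$ stays away from the negative real axis, which contains all the branch cuts, so the entries of $F_{\alpha,2,a}$ and $F_{\beta,1,a}$ are continuous on the compact set $\gamma_a$. They are given explicitly in the appendix, as analogues of \eqref{eq:f1}–\eqref{eq:f2}.

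The trace is the real content of the lemma. I will first compute it at $a=1$ from \eqref{eq:f1}–\eqref{eq:f2}. Write $s_\eps = \sqrt{z(z+\eps^2)(1+\eps^2 z)}$. A direct multiplication should give
\begin{equation}
\operatorname{tr}\left(F_{\alpha,2}F_{\beta,1}\right) = \frac12 - \frac{z^2(\alpha^2-1)(\beta^2-1) + z(z+1)^2\alpha^2... }{4 s_\alpha s_\beta},
\end{equation}
with the numerator to be worked out carefully. I expect this to equal $\tfrac12 - g_{\alpha,\beta}(z)$. That fits the appearance of $c(w) = 2/(1+2g_{\alpha,\beta}(w))$: summing the geometric series $\sum_k \lambda^k$ gives $1/(1-\lambda) = 2/(1+2g)$ exactly when $\lambda = \tfrac12 - g$. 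Alternatively, since $\operatorname{tr}(F_{\alpha,1}F_{\beta,1}) + \operatorname{tr}(F_{\alpha,2}F_{\beta,1}) = 1$, it is enough to identify $\operatorname{tr}(F_{\alpha,1}F_{\beta,1}) = \tfrac12 + g_{\alpha,\beta}$. The same computation goes through with the biased matrices, giving a biased analogue $\lambda_a$ that depends continuously on $a$.

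It then suffices to show that $|\tfrac12 - g_{\alpha,\beta}(w)| < 1$, equivalently that $\Re g_{\alpha,\beta}(w) > -\tfrac12$ with room to spare, on the contour. The contour can be chosen as a small circle around $1$, uniformly in $a$ near $1$. At $w=1$, formula \eqref{eq:g} gives
\begin{equation}
g_{\alpha,\beta}(1) = \frac{(1+\alpha^2)(1+\beta^2)}{2(1+\alpha^2)(1+\beta^2)} = \frac12,
\end{equation}
so $\lambda(1) = 0$. More generally, for $w>0$ the AM–GM/Cauchy–Schwarz inequality gives $g \ge \tfrac12$, and near $w=1$ it gives $g\approx \tfrac12$. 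Hence $|\lambda|$ is small in a neighborhood of $1$. By continuity in $w$ and in $a$, choosing $\gamma_a$ to be a fixed small circle around $1$ (containing $a^2$ for $a$ close to $1$) gives $\sup|\lambda_a|\le q<1$.

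The main obstacle is this step. I have to verify the trace identity, and control $\lambda_a$ for $a\neq1$ on a contour that must enclose $a^2$ but not $a^{-2}$. I would handle it with a perturbation argument: fix the circle $|w-1| = \delta$, which encloses both $a^2$ and $a^{-2}$ only when $a$ is very close to $1$. If that circle is inadmissible, I would instead use the paper's freedom to deform $\gamma_a$ to a contour through a point between $a^2$ and $a^{-2}$, staying within distance $\delta$ of $1$. Then continuity of $\lambda_a(w)$ jointly in $(a,w)$ near $(1,1)$, together with $\lambda_1(1)=0$, yields the uniform bound.
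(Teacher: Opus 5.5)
Your proof is correct, but it finds the zero of $\lambda$ in a different place than the paper does.

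\textbf{The paper's route.} The paper computes the trace directly with the biased matrices and checks the exact identity $\lambda_a(a^2)=0$ for every $a\in(0,1)$. It then shrinks $\gamma_a$ to a small circle about the pole $a^2$. The lemma therefore holds for each fixed $a$ with no perturbation argument, although the radius, and hence the decay rate, depends on $a$.

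\textbf{Your route.} You anchor at $\lambda_1(1)=0$ and extend by joint continuity in $(a,w)$. This gives the lemma only for $a$ near $1$, which is all that the subsequent limit $a\to1$ requires. In exchange you get a rate $q$ that is uniform in $a$. The cost is a contour pinched between $1$ and $a^{-2}$. Since the lemma is needed as $a\to1$, a fixed circle $|w-1|=\delta$ eventually encloses $a^{-2}$. So the pinched contour in your last paragraph is what you actually need, not a fallback.

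\textbf{What your version adds.}
\begin{enumerate}
\item Your Cayley--Hamilton reduction $M^n=\lambda^{n-1}M$, together with boundedness of $M$ on $\gamma_a$, makes the uniformity in $w$ explicit. The paper's bare statement that the eigenvalues are $0$ and $\lambda$ leaves this implicit.
\item Your guess $\operatorname{tr}(F_{\alpha,2}F_{\beta,1})=\tfrac12-g_{\alpha,\beta}$ is right. The numerator collapses to $2z(1+\alpha^2\beta^2)+(\alpha^2+\beta^2)(z^2+1)$, which equals $(z+\alpha^2)(1+\beta^2z)+(z+\beta^2)(1+\alpha^2z)$.
\item With this identity, $g=\tfrac14\bigl(\sqrt{P/Q}+\sqrt{Q/P}\bigr)$ for these two products $P,Q$. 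Both products equal $(1+\alpha^2)(1+\beta^2)$ at $z=1$, which is why $\lambda_1(1)=0$.
\item The same identity gives Lemma~\ref{lem:tech-lemma-2} immediately as the geometric series $\sum_{k\ge1}\lambda^{k-1}=1/(1-\lambda)=c$.
\end{enumerate}

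\textbf{One slip.} The condition $|\tfrac12-g|<1$ is not equivalent to $\Re g>-\tfrac12$; $g$ real and larger than $\tfrac32$ satisfies the latter but not the former. This does not affect your argument, since you never use the equivalence.
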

The proof is given in Section \ref{section:technical-lemmas}. Thus the last term vanishes as $n \to \infty$, and the limit $a \to 1$ may now be taken. In doing so, the integral in equation \eqref{eq:thm-3.1} becomes, 
\begin{multline} 
    \Big[\mathbb{K}_{a,N}(4m',2\xi'+j;4m,2\xi+i)\Big]_{i,j=0}^1 = -\frac{\mathbb{I}_{m>m'}}{2 \pi \ii} \oint_{\gamma_{0,1}} \frac{\dz}{z} z^{\xi'-\xi} \phi_{m \to m', N}(z)\\
    + \frac{1}{(2\pi\ii)^2} \oint_{\gamma_1} \dw \oint_{\gamma_{0,1}} \frac{\dz}{z(z-w)} \frac{w^{\xi'}}{z^{\xi}} r_{\alpha,1}(w)^{-m'}F_{\alpha,1}(w)\widetilde{\phi}_{-,N}(w) \widetilde{\phi}_{-,N}(z)^{-1}\phi_{0\to m,N}(z) \\
    + \frac{1}{(2\pi\ii)^2} \oint_{\gamma_1} \dw \oint_{\gamma_{0,1}} \frac{\dz}{z(z-w)} \frac{w^{\xi'}}{z^{\xi}} r_{\alpha,2}(w)^{2tN-m'}\left( \sum_{k=1}^\infty \left(F_{\alpha,2}(w)F_{\beta,1}(w)\right)^k \right) F_{\alpha,1}(w) \widetilde{\phi}_{-,N}(w) \\
    \times \widetilde{\phi}_{-,N}(z)^{-1}\phi_{0\to m,N}(z) 
\end{multline}
We are able to simplify the second integral above using the following lemma, 
\begin{lemma} \label{lem:tech-lemma-2}
    For any $w \in \gamma_1$,
    \begin{equation}
        \sum_{k=1}^\infty \left(F_{\alpha,2}(w)F_{\beta,1}(w)\right)^k = c(w)F_{\alpha,2}(w)F_{\beta,1}(w)
    \end{equation}
    where $c(w)$ is defined by equation \eqref{eq:c-def}.
\end{lemma}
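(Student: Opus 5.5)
The approach is to use the fact that $F_{\alpha,2}(w)$ and $F_{\beta,1}(w)$ are rank-one projections. By \eqref{eq:f1}–\eqref{eq:f2}, each is $E_\eps(w)$ times a rank-one diagonal idempotent times $E_\eps(w)^{-1}$. So we can write $F_{\alpha,2}=u v^{T}$ with $v^{T}u=1$, and $F_{\beta,1}=u'v'^{T}$ with $v'^{T}u'=1$. Then
\[
F_{\alpha,2}F_{\beta,1}=(v^{T}u')\,u v'^{T},
\]
and an immediate induction gives
\[
\left(F_{\alpha,2}F_{\beta,1}\right)^{k}=\lambda(w)^{k-1}F_{\alpha,2}F_{\beta,1},
\qquad
\lambda(w)=(v^{T}u')(v'^{T}u)=\operatorname{tr}\!\left(F_{\alpha,2}(w)F_{\beta,1}(w)\right).
\]
Summing the geometric series gives $\sum_{k\ge1}(F_{\alpha,2}F_{\beta,1})^{k}=\frac{1}{1-\lambda(w)}F_{\alpha,2}F_{\beta,1}$, provided $|\lambda(w)|<1$. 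It then remains to show $\frac{1}{1-\lambda}=c(w)$, which is equivalent to $\lambda(w)=\tfrac12-g_{\alpha,\beta}(w)$, and to check convergence on $\gamma_1$.

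\emph{Computing the trace.} Write $F_{\eps,1}=\tfrac12 I-A_\eps$ and $F_{\eps,2}=\tfrac12 I+A_\eps$, where $A_\eps$ is the traceless matrix
\[
A_\eps(w)=\frac{1}{2s_\eps(w)}\begin{pmatrix} w(\eps^2-1) & \eps^2(w+1)\\ w(w+1) & -w(\eps^2-1)\end{pmatrix},
\qquad
s_\eps(w)=\sqrt{w(w+\eps^2)(1+\eps^2 w)}.
\]
Since both $A$'s are traceless,
\[
\lambda=\operatorname{tr}\left(\left(\tfrac12 I+A_\alpha\right)\left(\tfrac12 I-A_\beta\right)\right)=\tfrac12-\operatorname{tr}(A_\alpha A_\beta).
\]
A direct computation gives
\[
\operatorname{tr}(A_\alpha A_\beta)=\frac{2w^2(\alpha^2-1)(\beta^2-1)+w(w+1)^2(\alpha^2+\beta^2)}{4s_\alpha s_\beta}.
\]
Factoring $w$ out of $s_\alpha s_\beta$ and expanding the numerator, the numerator becomes $w\left(2w(1+\alpha^2\beta^2)+(\alpha^2+\beta^2)(w^2+1)\right)$, so $\operatorname{tr}(A_\alpha A_\beta)=g_{\alpha,\beta}(w)$ as in \eqref{eq:g}. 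Hence $1-\lambda=\tfrac12+g_{\alpha,\beta}$, and $\frac{1}{1-\lambda}=\frac{2}{1+2g_{\alpha,\beta}}=c(w)$.

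\emph{The main obstacle: branches and convergence.} There are two points to control. First, we need $s_\alpha(w)s_\beta(w)/w=\sqrt{(w+\alpha^2)(1+\alpha^2w)(w+\beta^2)(1+\beta^2w)}$ with the branch of $g_{\alpha,\beta}$ fixed after \eqref{eq:g}. The cuts $(-\infty,-\eps^{-2}]\cup[-\eps^2,0]$ of the individual square roots combine so that the product is analytic off $[-\alpha^{-2},-\beta^{-2}]\cup[-\beta^2,-\alpha^2]$. Both sides are positive for $w>0$, so they agree there and hence, by analytic continuation, everywhere off the cuts, in particular on $\gamma_1$. Second, we need $|\lambda(w)|<1$ on $\gamma_1$. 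At $w=1$ the numerator of $g_{\alpha,\beta}$ equals $2(1+\alpha^2)(1+\beta^2)$ and the square root equals $(1+\alpha^2)(1+\beta^2)$, so $g_{\alpha,\beta}(1)=\tfrac12$ and $\lambda(1)=0$. By continuity $|\lambda|<1$ on a small circle around $1$. Since the $w$-integrand in the kernel has no other singularity inside $\gamma_1$ near $1$, we can take $\gamma_1$ to be such a circle. Alternatively, uniform convergence can be inherited from the $a\to1$ limit of Lemma \ref{lem:tech-lemma-1}, which forces $|\lambda|<1$ on the contour. On that contour the series converges absolutely and equals $c(w)F_{\alpha,2}(w)F_{\beta,1}(w)$, as claimed.
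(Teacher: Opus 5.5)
Your argument is correct, and it reaches the key scalar identity by a different route from the paper.

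\textbf{The paper's route.} It writes the sum as $(I-F_{\alpha,2}F_{\beta,1})^{-1}-I$ and borrows convergence from Lemma~\ref{lem:tech-lemma-1}. It then verifies $(I-F_{\alpha,2}F_{\beta,1})(I+c\,F_{\alpha,2}F_{\beta,1})=I$. That verification uses the relation $c\,F_{\alpha,2}F_{\beta,2}F_{\alpha,2}=F_{\alpha,2}$, which the paper asserts without computation.

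\textbf{Your route.} You use the rank-one structure to collapse $(F_{\alpha,2}F_{\beta,1})^k=\lambda^{k-1}F_{\alpha,2}F_{\beta,1}$ and sum a scalar geometric series. You compute $\lambda=\operatorname{tr}(F_{\alpha,2}F_{\beta,1})=\tfrac12-g_{\alpha,\beta}$ explicitly through the traceless parts $A_\eps$. This is exactly the content hidden in the paper's unproved step. For rank-one projections, $F_{\alpha,2}F_{\beta,2}F_{\alpha,2}=\operatorname{tr}(F_{\alpha,2}F_{\beta,2})\,F_{\alpha,2}$, and your computation gives $\operatorname{tr}(F_{\alpha,2}F_{\beta,2})=\tfrac12+g_{\alpha,\beta}=1/c$.

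\textbf{What each buys.} Your version is more self-contained. It explains where $c(w)$ comes from and checks that the branch of $g_{\alpha,\beta}$ matches the product of the individual square roots. It also gives a direct convergence argument from $\lambda(1)=0$, with the contour shrunk to a small circle around $1$; this is the same reduction the paper uses in Lemma~\ref{lem:tech-lemma-1}. The paper's inverse-verification is shorter once the projection identity is granted. It also has the minor advantage that $(I-F_{\alpha,2}F_{\beta,1})^{-1}=I+c\,F_{\alpha,2}F_{\beta,1}$ holds as an algebraic identity wherever $c$ is finite, not only where the series converges.
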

The proof of this lemma is postponed to Section \ref{section:technical-lemmas}. It remains to record the explicit form of the Wiener–Hopf factorization. As presented in \cite[Theorem 5.2]{BD19} after taking the limit $a \to 1$, the factorization is simple. We have, 
\begin{equation}
    \widetilde{\phi}_{-,N}(w) = \frac{w^N}{(1-w)^N}\Phi_{\alpha}(w)^{tN}\Phi_{\beta}(w)^{(1/2 - t)N}.
\end{equation}

\begin{figure}
    \centering
    \begin{tikzpicture}[scale=1.2]
        \node at (0,5) {$r_{\alpha,1}^{-m'}F_{\alpha,1,a}\tilde{\phi}_{-,N,a} + r_{\alpha,2,a}^{-m'}F_{\alpha,2,a}\tilde{\phi}_{-,N,a}$};
        \node[rotate=270] at (-1.5,4.5) {$\to$};
        \node at (-1.5,4) {ready for limit};
        \node[rotate=90] at (1.5,4.5) {$\equiv$};
        \node at (2,4) {$r_{\alpha,2,a}^{tN-m'}F_{\alpha,2,a}\Phi_{\beta,a}^{(1-t)N} \tilde{\phi}_{+,N,a}^{-1}$};
        \node[rotate=90] at (1.5,3.5) {$\equiv$};
        \node at (1.5,3) {$r_{\alpha,2,a}^{tN-m'}r_{\beta,1,a}^{(1-t)N}F_{\alpha,2,a}F_{\beta,1,a} \tilde{\phi}_{+,N,a}^{-1}+r_{\alpha,2,a}^{tN-m'}r_{\beta,2,a}^{(1-t)N}F_{\alpha,2,a}F_{\beta,2,a} \tilde{\phi}_{+,N,a}^{-1}$};
        \node[rotate=90] at (4,2.5) {$\equiv$};
        \node at (4,2) {$0$};
        \node[rotate=90] at (0,2) {$\equiv\equiv\equiv\equiv$};
        \node at (0.5,1) {$r_{\alpha,2,a}^{-m'}F_{\alpha,2,a}F_{\beta,1,a}F_{\alpha,2,a}\tilde{\phi}_{-,N,a} + r_{\alpha,2,a}^{tN-m'}r_{\alpha,1,a}^{-tN}F_{\alpha,2,a}F_{\beta,1,a}F_{\alpha,1,a}\tilde{\phi}_{-,N,a}$};
        \node[rotate=270] at (2,0.5) {$\to$};
        \node at (2,0) {ready for limit};
        \node[rotate=270] at (-2,0.5) {$\to$};
        \node at (-2,0) {repeat the process!};
    \end{tikzpicture}
    \caption{Diagram to illustrate the kernel proof in the case of $m < tN$. We use equivalence symbols to denote that the expressions are interchangeable within the contour integral. }
    \label{fig:placeholder}
\end{figure}

\subsection{Case 2: Right of the Interface}
The proof for $m' > tN$ follows a structure identical to the preceding case, but with some subtle differences. Write,
\begin{equation}
    \phi_{m'\to N,N, a}(w)\widetilde{\phi}_{+,N,a}(w)^{-1} = \Phi_{\beta,a}(w)^{N-m'}\widetilde{\phi}_{+,N,a}(w)^{-1}.
\end{equation}
Expanding $\Phi_{\beta,a}$ via its eigen-decomposition gives,
\begin{equation}
    \phi_{m'\to N,N, a}(w)\widetilde{\phi}_{+,N,a}(w)^{-1} = \left(r_{\beta,1,a}(w)^{N-m'}F_{\beta,1,a}(w) + r_{\beta,2,a}(w)^{N-m'}F_{\beta,2,a}(w)\right)\widetilde{\phi}_{+,N,a}(w)^{-1}.
\end{equation}
Substituting the term
\begin{equation}
    r_{\beta,2,a}(w)^{N-m'}F_{\beta,2,a}(w)\widetilde{\phi}_{+,N,a}(w)^{-1}
\end{equation}
into \eqref{eq:thm-3.1} yields no poles inside $\gamma_a$, and so
\begin{equation}
    r_{\beta,2,a}(w)^{N-m'}F_{\beta,2,a}(w)\widetilde{\phi}_{+,N,a}(w)^{-1} \equiv 0
\end{equation}
with respect to the integral. The remaining term,
\begin{equation}
    r_{\beta,1,a}(w)^{N-m'}F_{\beta,1,a}(w)\widetilde{\phi}_{+,N,a}(w)^{-1},
\end{equation}
has poles both inside and outside the contour. Applying \eqref{eq:WH-decomp} gives,
\begin{equation}
    r_{\beta,1,a}(w)^{N-m'}F_{\beta,1,a}(w)\widetilde{\phi}_{+,N,a}(w)^{-1} = r_{\beta,1,a}(w)^{N-m'}F_{\beta,1,a}(w)\Phi_{\beta,a}(w)^{-(1-t)N}\Phi_{\alpha,a}(w)^{-tN}\widetilde{\phi}_{-,N,a}(w),
\end{equation}
and simplifying by orthogonality yields,
\begin{equation}
    r_{\beta,1,a}(w)^{N-m'}F_{\beta,1,a}(w)\widetilde{\phi}_{+,N,a}(w)^{-1} = r_{\beta,1,a}(w)^{tN-m'}F_{\beta,1,a}(w)\Phi_{\alpha,a}(w)^{-tN}\widetilde{\phi}_{-,N,a}(w).
\end{equation}
Expanding $\Phi_{\alpha,a}$ via its eigen-decomposition gives,
\begin{multline}
    r_{\beta,1,a}(w)^{N-m'}F_{\beta,1,a}(w)\widetilde{\phi}_{+,N,a}(w)^{-1} \\
    = r_{\beta,1,a}(w)^{tN-m'}r_{\alpha,1,a}(w)^{-tN}F_{\beta,1,a}(w)F_{\alpha,1,a}(w)\widetilde{\phi}_{-,N,a}(w) \\
    + r_{\beta,1,a}(w)^{tN-m'}r_{\alpha,2,a}(w)^{-tN}F_{\beta,1,a}(w)F_{\alpha,2,a}(w)\widetilde{\phi}_{-,N,a}(w).
\end{multline}
The first term on the right, when substituted into \eqref{eq:thm-3.1}, has no pole at $a^{-2}$ and is therefore amenable to the limit $a \to 1$. Applying \eqref{eq:WH-decomp} to the second term gives,
\begin{multline}
    r_{\beta,1,a}(w)^{tN-m'}r_{\alpha,2,a}(w)^{-tN}F_{\beta,1,a}(w)F_{\alpha,2,a}(w)\widetilde{\phi}_{-,N,a}(w) \\
    = r_{\beta,1,a}(w)^{N-m'}F_{\beta,1,a}(w)F_{\alpha,2,a}(w)F_{\beta,1,a}(w)\widetilde{\phi}_{+,N,a}(w)^{-1} \\
    + r_{\beta,1,a}(w)^{tN-m'}r_{\beta,2,a}(w)^{(1-t)N}F_{\beta,1,a}(w)F_{\alpha,2,a}(w)F_{\beta,2,a}(w)\widetilde{\phi}_{+,N,a}(w)^{-1}.
\end{multline}
The second term on the right has no poles inside $\gamma_a$ and so does not contribute to the contour integral. The expression is now in a form amenable to iteration. After $n$ iterations we obtain,
\begin{multline}
    \phi_{m'\to N,a,N}(w)\widetilde{\phi}_{+,N,a}(w)^{-1} \\
    \equiv r_{\beta,1,a}(w)^{tN-m'} r_{\alpha,1,a}(w)^{-tN} F_{\beta,1,a}(w)  \left( \sum_{k=0}^n \left(F_{\alpha,2,a}(w)F_{\beta,1,a}(w)\right)^k\right) F_{\alpha,1,a}(w)\widetilde{\phi}_{-,N,a}(w) \\
    + r_{\beta,1,a}(w)^{tN-m'} r_{\alpha,2,a}(w)^{-tN} F_{\beta,1,a}(w)\left(F_{\alpha,2,a}(w)F_{\beta,1,a}(w)\right)^n F_{\alpha,2,a}(w)\widetilde{\phi}_{-,N,a}(w).
\end{multline}
By Lemma \ref{lem:tech-lemma-1} the second term vanishes as $n \to \infty$. Taking the limit $a \to 1$ in the first term and applying Lemma \ref{lem:tech-lemma-2} then yields the result stated in Theorem \ref{theorem:kernel}.

\section{Asymptotics Proofs} \label{section:asymptotics}

We will start by proving Proposition \ref{prop:right-side-asymptotics}. As explained briefly in Section \ref{section:asymptoticresults}, it is sufficient to show $I_{\beta,N} \to 0$ as $N \to \infty$. The contour integral $I_{\beta,N}$ is stated explicitly in \eqref{eq:right-side-extra}. 

\begin{proof}
Recall that $|r_{\beta,1}(w)| > 1$ for all $w \in \gamma_1$ and let
\begin{equation}
    r = \min_{w\in\gamma_1} |r_{\beta,1}(w)|.
\end{equation}
Additionally, the matrix-valued function $c(w)F_{\alpha,1}(w)F_{\beta,2}(w)$ is analytic and bounded on $\gamma_1$. Let 
\begin{equation}
    M = \max_{w \in \gamma_1} \| c(w) F_{\alpha,1}(w) F_{\beta,2}(w) \|,
\end{equation}
where we use $\| \cdot \|$ to denote the entry-wise max. We then bound 
\begin{multline} \label{eq:right_side_ineq}
    \left|\left[I_{\beta,N}(4m',2\xi'+j;4m,2\xi+i)\right]_{i,j=0,1}\right| \leq r^{(2t-1)N} M \\
    \times \Bigg|\frac{1}{(2\pi\ii)^2}\oint_{\gamma_1}\dw\oint_{\gamma_{0,1}} \frac{\dz}{z(z-w)}\frac{w^{\xi'+N}(z-1)^N}{z^{\xi+N}(w-1)^N}r_{\beta,1}(w)^{N/2 - m'} F_{\beta,1}(w)\Phi_{\beta}(z)^{m-N/2} \Bigg|.
\end{multline}
The double contour integral that remains exactly appears in the correlation kernel of the two-periodic Aztec diamond. This integral is known to converge to a finite value, see \cite{CJ16, DK21}. Since $t < 1/2$, $r^{(2t-1)N} \to 0$ as $N \to \infty$, so $I_{\beta,N} \to 0$. 
\end{proof}

Next we derive the form of the kernel to the left of the interface when $\beta = 0$. This kernel is stated in Proposition \ref{prop:b=0-case}. We start from the form of the kernel presented in Lemma \ref{lem:simplified-left-kernel} and take the limit $\beta \to 0$.

\begin{proof}
    We start with the form of the kernel in \eqref{eq:kernel-left-simplified} and expand out $\Phi_\beta(z)$ using its eigen-decomposition
    \begin{multline} \label{eq:simplified-kernel-left-eq2}
        \left[\K_N(4m',2\xi'+j;4m,2\xi+i)\right]_{i,j=0,1} = \frac{\1_{m\leq m'}}{2\pi \ii} \oint_{\gamma_{0,1}}\frac{\dz}{z} z^{\xi'-\xi}\Phi_{\alpha}(z)^{m-m'} \\
        -\frac{1}{(2\pi\ii)^2}\oint_{\gamma_1}\dw\oint_{\gamma_{0,1}} \frac{\dz}{z(z-w)}\frac{w^{\xi'+N}(z-1)^N}{z^{\xi+N}(w-1)^N}r_{\alpha,2}(w)^{tN-m'}r_{\beta,2}(w)^{(1/2-t)N} r_{\beta,2}(z)^{(t-1/2)N}\\
        \times c(w) F_{\alpha,2}(w) F_{\beta,2}(w) F_{\beta,2}(z) \Phi_{\alpha}(z)^{m-tN} \\
        -\frac{1}{(2\pi\ii)^2}\oint_{\gamma_1}\dw\oint_{\gamma_{0,1}} \frac{\dz}{z(z-w)}\frac{w^{\xi'+N}(z-1)^N}{z^{\xi+N}(w-1)^N}r_{\alpha,2}(w)^{tN-m'}r_{\beta,2}(w)^{(1/2-t)N}r_{\beta,1}(z)^{(t-1/2)N} \\
        \times c(w) F_{\alpha,2}(w) F_{\beta,2}(w) F_{\beta,1}(z) \Phi_{\alpha}(z)^{m-tN}.
    \end{multline}
    It is necessary to observe that the functions $c$, $F_{\beta,j}$, and $r_{\beta,j}$ for $j \in \{1,2\}$ are analytic in $\beta$ on the contours of integration, which remain a positive distance from any singularities and branch cuts. The pointwise limits of $F_{\beta,2}$ and $c$ are given in \eqref{eq:f-tilde} and \eqref{eq:c-tilde}, respectively. Since 
    \[
    \lim_{\beta \to 0^+} r_{\beta,2}(w) = 0,
    \]
    the second double contour integral in \eqref{eq:simplified-kernel-left-eq2} is exactly 0 in the limit. We express the first double contour integral as
    \begin{multline}
        -\frac{1}{(2\pi\ii)^2}\oint_{\gamma_1}\dw\oint_{\gamma_{0,1}} \frac{\dz}{z(z-w)}\frac{w^{\xi'+N}(z-1)^N}{z^{\xi+N}(w-1)^N}r_{\alpha,2}(w)^{tN-m'}\left(r_{\beta,2}(w)r_{\beta,1}(z)\right)^{(1/2-t)N}\\
        \times c(w) F_{\alpha,2}(w) F_{\beta,2}(w) F_{\beta,2}(z) \Phi_{\alpha}(z)^{m-tN}.
    \end{multline}
    Observe that 
    \begin{equation}
        \lim_{\beta \to 0^+} r_{\beta,2}(w)r_{\beta,1}(z) = \frac{z(w-1)^2}{w(z-1)^2},
    \end{equation}
    so the desired results is obtained.
\end{proof}

\section{Technical Lemmas} \label{section:technical-lemmas}

We beginning by proving Lemma \ref{lem:simplified-left-kernel} stated in Section \ref{section:asymptoticresults}.

\begin{proof}[Proof of Lemma \ref{lem:simplified-left-kernel}]
    To begin we rewrite the kernel presented in equation \eqref{eq:kernel-left} under the assumption $m,\,m'<tN$,
    \begin{multline}
        \left[\K_N(4m',2\xi'+j;4m,2\xi+i)\right]_{i,j=0,1} = -\frac{\1_{m>m'}}{2\pi \ii} \oint_{\gamma_{0,1}}\frac{\dz}{z} z^{\xi'-\xi}\Phi_{\alpha}(z)^{m-m'}\\
        +\frac{1}{(2\pi\ii)^2}\oint_{\gamma_1} \dw \oint_{\gamma_{0,1}} \frac{\dz}{z(z-w)}\frac{w^{\xi'+N}(z-1)^N}{z^{\xi+N}(w-1)^N}r_{\alpha,1}(w)^{tN-m'}F_{\alpha,1}(w) \\
        \times \Phi_{\beta}(w)^{(1/2-t)N} \Phi_{\beta}(z)^{(t-1/2)N}\Phi_{\alpha}(z)^{m-tN} \\
        + \frac{1}{(2\pi\ii)^2}\oint_{\gamma_1}\dw\oint_{\gamma_{0,1}} \frac{\dz}{z(z-w)}\frac{w^{\xi'+N}(z-1)^N}{z^{\xi+N}(w-1)^N}r_{\alpha,2}(w)^{tN-m'} c(w) F_{\alpha,2}(w) F_{\beta,1}(w)F_{\alpha,1}(w) \\
        \times\Phi_{\beta}(w)^{(1/2-t)N} \Phi_{\beta}(z)^{(t-1/2)N} \Phi_{\alpha}(z)^{m-tN}.
    \end{multline}
    Our manipulation will be focus on the first double contour integral. First we deform the contour $\gamma_1$ so that it surrounds the branch cuts $(-\infty,-\alpha^{-2}]$ and $[-\alpha^2,0]$. We call the new contour $\gamma_{\text{Br}}$. This deformation is possible because there is no singularity at infinity. This manipulation produces an additional single integral term from the residue $z=w$. The single integral term has the form,
    \begin{equation}
        \frac{1}{2\pi\ii}\oint_{\gamma_{0,1}} \frac{\dz}{z} z^{\xi'-\xi} r_{\alpha,1}(z)^{m'-m} F_{\alpha,1}(z).
    \end{equation}
    At this point, we can pass the contour $\gamma_{\text{Br}}$ through the branch cuts and replace the functions $r_{\alpha,1}(w)$ and $F_{\alpha,1}(w)$ with their analytic continuations, $r_{\alpha,2}(w)$ and $F_{\alpha,2}(w)$. At this point the kernel has the form, 
    \begin{multline}
        \left[\K_N(4m',2\xi'+j;4m,2\xi+i)\right]_{i,j=0,1} \\
        = -\frac{\1_{m>m'}}{2\pi \ii} \oint_{\gamma_{0,1}}\frac{\dz}{z} z^{\xi'-\xi}\Phi_{\alpha}(z)^{m-m'} 
        + \frac{1}{2\pi\ii}\oint_{\gamma_{0,1}} \frac{\dz}{z} z^{\xi'-\xi} r_{\alpha,1}(z)^{m'-m} F_{\alpha,1}(z)\\
        - \frac{1}{(2\pi\ii)^2}\oint_{\gamma_{\text{Br}}} \dw \oint_{\gamma_{0,1}} \frac{\dz}{z(z-w)}\frac{w^{\xi'+N}(z-1)^N}{z^{\xi+N}(w-1)^N}r_{\alpha,2}(w)^{tN-m'}F_{\alpha,2}(w) \\
        \times \Phi_{\beta}(w)^{(1/2-t)N} \Phi_{\beta}(z)^{(t-1/2)N}\Phi_{\alpha}(z)^{m-tN} \\
        + \frac{1}{(2\pi\ii)^2}\oint_{\gamma_1}\dw\oint_{\gamma_{0,1}} \frac{\dz}{z(z-w)}\frac{w^{\xi'+N}(z-1)^N}{z^{\xi+N}(w-1)^N}r_{\alpha,2}(w)^{tN-m'} c(w) F_{\alpha,2}(w) F_{\beta,1}(w)F_{\alpha,1}(w) \\
        \times\Phi_{\beta}(w)^{(1/2-t)N} \Phi_{\beta}(z)^{(t-1/2)N} \Phi_{\alpha}(z)^{m-tN}.
    \end{multline}
    Notice that the sign on the first double contour integral changes because passing the contour through the branch cuts switches the direction of the contour. We can now combine the two double contour integrals. To do this, deform the contour $\gamma_{\text{Br}}$ back to the contour $\gamma_1$, which introduces another single integral term. We obtain, 
    \begin{multline}
        \left[\K_N(4m',2\xi'+j;4m,2\xi+i)\right]_{i,j=0,1} = -\frac{\1_{m>m'}}{2\pi \ii} \oint_{\gamma_{0,1}}\frac{\dz}{z} z^{\xi'-\xi}\Phi_{\alpha}(z)^{m-m'} \\
        + \frac{1}{2\pi\ii}\oint_{\gamma_{0,1}} \frac{\dz}{z} z^{\xi'-\xi} r_{\alpha,1}(z)^{m'-m} F_{\alpha,1}(z)
        + \frac{1}{2\pi\ii}\oint_{\gamma_{0,1}} \frac{\dz}{z} z^{\xi'-\xi} r_{\alpha,2}(z)^{m'-m} F_{\alpha,2}(z)\\
        + \frac{1}{(2\pi\ii)^2}\oint_{\gamma_1}\dw\oint_{\gamma_{0,1}} \frac{\dz}{z(z-w)}\frac{w^{\xi'+N}(z-1)^N}{z^{\xi+N}(w-1)^N}r_{\alpha,2}(w)^{tN-m'} \left(- F_{\alpha,2}(w) + c(w) F_{\alpha,2}(w) F_{\beta,1}(w)F_{\alpha,1}(w)\right) \\
        \times\Phi_{\beta}(w)^{(1/2-t)N} \Phi_{\beta}(z)^{(t-1/2)N} \Phi_{\alpha}(z)^{m-tN}.
    \end{multline}
    Recombining the single integral terms gives, 
    \begin{multline} \label{eq:kernel_manipulation}
        \left[\K_N(4m',2\xi'+j;4m,2\xi+i)\right]_{i,j=0,1} = \frac{\1_{m\leq m'}}{2\pi \ii} \oint_{\gamma_{0,1}}\frac{\dz}{z} z^{\xi'-\xi}\Phi_{\alpha}(z)^{m-m'} \\
        + \frac{1}{(2\pi\ii)^2}\oint_{\gamma_1}\dw\oint_{\gamma_{0,1}} \frac{\dz}{z(z-w)}\frac{w^{\xi'+N}(z-1)^N}{z^{\xi+N}(w-1)^N}r_{\alpha,2}(w)^{tN-m'} \left(- F_{\alpha,2}(w) + c(w) F_{\alpha,2}(w) F_{\beta,1}(w)F_{\alpha,1}(w)\right) \\
        \times\Phi_{\beta}(w)^{(1/2-t)N} \Phi_{\beta}(z)^{(t-1/2)N} \Phi_{\alpha}(z)^{m-tN}.
    \end{multline}
    Lastly, we will simplify the matrix expression in the double contour integral. We write,  
    \begin{multline}
        - F_{\alpha,2}(w) + c(w) F_{\alpha,2}(w) F_{\beta,1}(w)F_{\alpha,1}(w) = - F_{\alpha,2}(w) + c(w) F_{\alpha,2}(w) F_{\beta,1}(w)(I - F_{\alpha,2}(w)) \\
        = - F_{\alpha,2}(w) + c(w) F_{\alpha,2}(w) F_{\beta,1}(w) - c(w) F_{\alpha,2}(w) F_{\beta,1}(w)F_{\alpha,2}(w)
    \end{multline}
    We can compute, 
    \begin{align*}
        c(w)F_{\alpha,2}(w)F_{\beta,1}(w)F_{\alpha,2}(w) &= c(w)F_{\alpha,2}(w)(I-F_{\beta,2}(w))F_{\alpha,2}(w) \\
        &= c(w)F_{\alpha,2}(w)F_{\alpha,2}(w) - c(w)F_{\alpha,2}(w)F_{\beta,2}(w)F_{\alpha,2}(w) \\
        &= c(w)F_{\alpha,2}(w) - F_{\alpha,2}(w).
    \end{align*}
    Using this above gives, 
    \begin{align}
        - F_{\alpha,2}(w) + c(w) F_{\alpha,2}(w) F_{\beta,1}(w)F_{\alpha,1}(w) &= c(w) F_{\alpha,2}(w) F_{\beta,1}(w) - c(w) F_{\alpha,2}(w) \\
        &= c(w) F_{\alpha,2}(w) F_{\beta,1}(w) - c(w) F_{\alpha,2}(w) \\
        &= c(w) F_{\alpha,2}(w) ( F_{\beta,1}(w) - I) \\
        &=  - c(w) F_{\alpha,2}(w) F_{\beta,2}(w)
    \end{align}
    Plugging the above into equation \eqref{eq:kernel_manipulation} gives the statement presented in the lemma. 
\end{proof}
Next we would like to prove the two technical lemmas necessary for proving the explicit statement of the kernel. We start with Lemma \ref{lem:tech-lemma-1}.
\begin{proof}[Proof of Lemma \ref{lem:tech-lemma-1}]
    We can explicitly compute the eigenvalues of the matrix $F_{\alpha,2,a}(w)F_{\beta,1,a}(w)$. One eigenvalue is 0 while the other is, 
    \begin{equation}
        \lambda(w) = \frac{1}{2} - \frac{2(\alpha^2+\beta^2)(w+1)^2+w(\alpha^2-1)(\beta^2-1)(a+a^{-1})^2}{8\alpha\beta \sqrt{(w^2+x_\alpha w+1)(w^2+x_\beta w+1)}}.
    \end{equation}
    We must show that $|\lambda(w)| \leq \rho < 1$ for all $w \in \gamma_a$. Before we begin to bound the components of $\lambda(w)$, one should recognize that we are able to shrink the contour $\gamma_a$ down to a circle of radius $r$ around $a^2$, where $r$ can be made arbitrarily small. This is because the components of the contour integral in equation \eqref{eq:thm-3.1} have no poles away from the branch cuts besides at $a^2$ and $a^{-2}$. 

    Observe that $\lambda(w)$ is analytic in a neighborhood of $w = a^2$. This is because the factor $w^2+x_\eps w+1$, for $\eps = \alpha,\beta$, is positive and non-zero at $w = a^2$. Moreover, one can compute that 
    \begin{align}
        (w^2+x_\alpha w+1)(w^2+x_\beta w+1)\Big|_{w=a^2} &=(a^4+x_\alpha a^2+1)(a^4+x_\beta a^2+1) \\
        &=\frac{1}{16}(a^2+1)^4(\alpha+\alpha^{-1})^2(\beta+\beta^{-1})^2.
    \end{align}
    Plugging the above into $\lambda$ yields, 
    \begin{align}
        \lambda(a^2) &= \frac{1}{2} - \frac{2(\alpha^2+\beta^2)(a^2+1)^2+a^2(\alpha^2-1)(\beta^2-1)(a+a^{-1})^2}{2\alpha\beta(a^2+1)^2(\alpha+\alpha^{-1})(\beta+\beta^{-1})} \\[6pt]
        &= \frac{1}{2} - \frac{2(\alpha^2+\beta^2)(a^2+1)^2+(\alpha^2-1)(\beta^2-1)(a^2+1)^2}{2(a^2+1)^2(\alpha^2+1)(\beta^2+1)} \\[6pt]
        &= \frac{1}{2} - \frac{(\alpha^2+1)(\beta^2+1)(a^2+1)^2}{2(a^2+1)^2(\alpha^2+1)(\beta^2+1)} = 0.
    \end{align}
    Since $\lambda(a^2)=0$, $\lambda(w)$ is analytic in a neighborhood of $a^2$, and we can shrink the contour $\gamma_a$ down to an arbitrarily small circle of radius $r$, this implies we may choose $r$ small enough to guarantee that $\lambda(w) \leq \rho < 1$.
\end{proof}
Next we will prove Lemma \ref{lem:tech-lemma-2}. The above lemma was proven for any $a \in (0,1]$, however in the following lemma we restrict to $a = 1$ for simplicity as it occurs after we take the limit $a \to 1$.  
\begin{proof}[Proof of Lemma \ref{lem:tech-lemma-2}]
    Lemma \ref{lem:tech-lemma-1} is true for any $a \in (0,1]$, thus it is enough to justify that the sum in Lemma \ref{lem:tech-lemma-2} converges. We write,
    \begin{equation}
        \sum_{k=1}^\infty \left(F_{\alpha,2}(w)F_{\beta,1}(w)\right)^k = \left(I-F_{\alpha,2}(w)F_{\beta,1}(w)\right)^{-1} - I.
    \end{equation}
    To prove the above lemma, we only need to check that 
    \begin{equation}
        \left(I-F_{\alpha,2}(w)F_{\beta,1}(w)\right)^{-1} = I + c(w)F_{\alpha,2}(w)F_{\beta,1}(w).
    \end{equation}
    The above fact can be proven using the following relation,
    \begin{align*}
        c(w)F_{\alpha,2}(w)F_{\beta,1}(w)F_{\alpha,2}(w) &= c(w)F_{\alpha,2}(w)(I-F_{\beta,2}(w))F_{\alpha,2}(w) \\
        &= c(w)F_{\alpha,2}(w)F_{\alpha,2}(w) - c(w)F_{\alpha,2}(w)F_{\beta,2}(w)F_{\alpha,2}(w) \\
        &= c(w)F_{\alpha,2}(w) - F_{\alpha,2}(w).
    \end{align*}
    Now we can check
    \begin{multline*}
        \left(I -F_{\alpha,2}(w)F_{\beta,1}(w) \right)\left(I + c(w)F_{\alpha,2}(w)F_{\beta,1}(w)\right) \\
        = I + c(w)F_{\alpha,2}(w)F_{\beta,1}(w) - F_{\alpha,2}(w)F_{\beta,1}(w) - c(w)F_{\alpha,2}(w)F_{\beta,1}(w)F_{\alpha,2}(w)F_{\beta,1}(w) \\
        = I + c(w)F_{\alpha,2}(w)F_{\beta,1}(w) - F_{\alpha,2}(w)F_{\beta,1}(w) - (c(w)F_{\alpha,2}(w) - F_{\alpha,2}(w))F_{\beta,1}(w) 
        = I.
    \end{multline*}
\end{proof}

\bibliographystyle{abbrv}
\bibliography{bibliography}

\appendix
\section{Biased Matrices: Full Definitions and Properties} \label{appendix:biased-info}

The eigenvalues of $\Phi_{\eps,a}$, which is defined by equation \eqref{eq:phi-eps-a}, are
\begin{equation}
    r_{\eps,1,a}(z) = \frac{1}{(z-a^2)^2}\Big((z+1)^2+\frac{1}{2}(a+a^{-1})^2(\eps^2+\eps^{-2}) + (a+a^{-1})(\eps+\eps^{-1})\sqrt{z(z^2+x_\eps z+1)}\Big)
\end{equation}
and 
\begin{equation}
    r_{\eps,2,a}(z) = \frac{1}{(z-a^2)^2}\Big((z+1)^2+\frac{1}{2}(a+a^{-1})^2(\eps^2+\eps^{-2}) \\
    - (a+a^{-1})(\eps+\eps^{-1})\sqrt{z(z^2+x_\eps z+1)}\Big),
\end{equation}
where 
\begin{equation}
    x_\eps = \frac{1}{4}\left((a+a^{-1})^2(\eps^2+\eps^{-2})-2(a-a^{-1})^2\right).
\end{equation}
One should observe that $x_\eps \geq 2$, so we can take the branch cuts to be 
\begin{equation}
    \left( -\infty, -\frac{x_\eps}{2} - \sqrt{\frac{x_\eps^2}{4}-1}\right] \bigcup \left[ -\frac{x_\eps}{2} + \sqrt{\frac{x_\eps^2}{4}-1}, 0\right],
\end{equation}
where we assume $\sqrt{z(z^2+x_\eps z+1)}$ is positive for $z > 0$. Thus $r_{\eps,1,a}(z)$ is analytic and non-zero away from the cuts except at $z = a^2$, where there is a pole of order 2. Similarly, $r_{\eps,2,a}(z)$ is analytic and non-zero away from the cuts except at $z = a^{-2}$, where there is a zero of order 2. We also can define explicitly the following matrices, 
\begin{equation}
    F_{\eps,1,a}(z) = \begin{pmatrix}
        \frac{1}{2}-\frac{z(a+a^{-1})(\eps-\eps^{-1})}{4\sqrt{z(z^2+x_\eps z+1)}} & 
        -\frac{\eps(z+1)}{2\sqrt{z(z^2+x_\eps z+1)}} \\
        -\frac{z\eps^{-1}(z+1)}{2\sqrt{z(z^2+x_\eps z+1)}} & 
        \frac{1}{2}+\frac{z(a+a^{-1})(\eps-\eps^{-1})}{4\sqrt{z(z^2+x_\eps z+1)}} \\
    \end{pmatrix}
\end{equation}
and
\begin{equation}
    F_{\eps,2,a}(z) = \begin{pmatrix}
        \frac{1}{2}+\frac{z(a+a^{-1})(\eps-\eps^{-1})}{4\sqrt{z(z^2+x_\eps z+1)}}  & 
        \frac{\eps(z+1)}{2\sqrt{z(z^2+x_\eps z+1)}} \\
        \frac{z\eps^{-1}(z+1)}{2\sqrt{z(z^2+x_\eps z+1)}} &  
        \frac{1}{2}-\frac{z(a+a^{-1})(\eps-\eps^{-1})}{4\sqrt{z(z^2+x_\eps z+1)}}\\
    \end{pmatrix}.
\end{equation}

\end{document}